\numberwithin{equation}{section}
\theoremstyle{plain}
\newtheorem{thm}{Theorem}[section]
\newtheorem{lem}[thm]{Lemma}
\newtheorem{prop}[thm]{Proposition}
\newcommand{\thmref}[1]{Theorem~\ref{#1}}
\newcommand{\lemref}[1]{Lemma~\ref{#1}}
\theoremstyle{definition}
\newtheorem*{hypo*}{Hypothesis~I}
\newcommand{\hyporef}[1]{Hypothesis~I}
\newcommand{\mc}{\mathcal}
\begin{document}

\title[On the signs of Fourier coefficients of Hilert cusp forms ]{On the signs of Fourier coefficients of Hilbert cusp forms}

\author{Ritwik Pal}
\address{Department of Mathematics\\
	Indian Institute of Science\\
	Bangalore - 560012, India.}
\email{ritwik.1729@gmail.com, ritwikpal@iisc.ac.in}

\date{}
\subjclass[2010]{11F30, 11F41} 
\keywords{ Hilbert modular forms, first sign change, Fourier coefficients, distribution of eigenvalues}

\begin{abstract}
We prove that given any $\epsilon > 0$ and a primitive adelic Hilbert cusp form $f$ of weight $k=(k_1,k_2,...,k_n) \in (2 \mathbb{Z})^n$ and full level, there exists an integral ideal $\mathfrak{m}$ with $N(\mathfrak{m}) \ll_{\epsilon} Q_{f}^{9/20+ \epsilon} $ such that the $\mathfrak{m}$-th Fourier coefficient of $C_{f} (\mathfrak{m})$ of $f$ is negative. Here $n$ is the degree of the associated number field, $N(\mathfrak{m})$ is the norm of integral ideal $\mathfrak{m}$ and $Q_{f}$ is the analytic conductor of $f$. In the case of arbitrary weights, we show that there is an integral ideal $\mathfrak{m}$ with $N(\mathfrak{m}) \ll_{\epsilon} Q_{f}^{1/2 + \epsilon}$ such that $C_{f}(\mathfrak{m}) <0$. We also prove that when $k=(k_1,k_2,...,k_n) \in (2 \mathbb{Z})^n$, asymptotically half of the Fourier coefficients are positive while half are negative.  
\end{abstract}
\maketitle 

\section{Introduction} 
 This paper is concerned with a quantitative result on the study of signs of Fourier coefficients of Hilbert cusp forms. This theme of research has seen a lot of activity in the recent past; here we just recall the landmark result of K. Matom\"aki for elliptic Hecke cusp forms, which also sets the ground of our results to follow. Let $Q_{f} \asymp k^2 N$ be the analytic conductor of an elliptic newform $f$ of level $N$ and weight $k$. Then it was proved in \cite{km1} that the first negative eigenvalue of $f$ occurs at some $n_0 \geq 1$ with $n_0 \ll Q_{f}^{3/8}$. The method in \cite{km1} is based on further refinement of that in \cite{sound} wherein a variety of results on statistical distribution of signs of Fourier coefficients of newforms were studied.
 
In the case of Hilbert newforms of arbitrary weight and level, the only known result seems to be the work of Meher and Tanabe (see \cite[Theorem 1.1, 1.2]{jm}). To describe their result let us introduce the following notation. Let $F$ be a totally real number field of degree $n$ associated with the adelic Hilbert newform $f$. Let $\{ C(\mathfrak{m}) \}_{\mathfrak{m}}$ denote the Fourier coefficients of $f$, indexed by the integral ideals $\mathfrak{m}$ and $Q_{f}$ denote the analytic conductor of $f$ (see next section for the definition). Then in \cite{jm} it is shown that the the sequence $\{C(\mathfrak{m})\}_{m}$ changes sign infinitely often and more quantitatively the main result of \cite{jm} states that there exists an integral ideal $\mathfrak{m}$ with
\[N(\mathfrak{m}) \ll_{n, \epsilon} Q_{f}^{1+\epsilon}\]
such that $C(\mathfrak{m}) <0$, where $N(\mathfrak{m})$ is the norm of integral ideal $\mathfrak{m}$. One of the aim of this paper is to improve upon this result. Our main result is the following.   
\begin{thm} \label{first sin chng}
Let $f$ be a Hilbert newform of weight $k=(k_1,k_2,...,k_n)$ and full level. Let $C(\mathfrak{m})$ denote the Fourier coefficient of $f$ at the ideal $\mathfrak{m}$. Then for any arbitrary $\epsilon >0$,
\begin{enumerate}
\item[{(i)}]
when $k_1,k_2,...,k_n$ are all even, we have $C(\mathfrak{m})<0$ for some ideal $\mathfrak{m}$ with 
\[ N(\mathfrak{m}) \ll_{n, \epsilon} Q_{f}^{\frac{9}{20}+ \epsilon}; \]
\item[{(ii)}]
otherwise we have $C(\mathfrak{m})<0$ for some ideal $\mathfrak{m}$ with
\[ N(\mathfrak{m}) \ll_{n, \epsilon} Q_{f}^{\frac{1}{2}+ \epsilon}. \]
\end{enumerate}
\end{thm}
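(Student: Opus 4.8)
The plan is to adapt the strategy of Matomäki \cite{km1} and its refinement of \cite{sound} to the Hilbert setting, by playing off the global positivity encoded in the Rankin–Selberg $L$-function against a putative sign pattern of the coefficients $C(\mathfrak{m})$. Suppose, for contradiction, that $C(\mathfrak{m}) \geq 0$ for all ideals $\mathfrak{m}$ with $N(\mathfrak{m}) \leq X$, where $X = Q_f^{\theta}$ for the relevant exponent $\theta$ (namely $9/20+\epsilon$ or $1/2+\epsilon$). The first step is to set up the Dirichlet series machinery over $F$: the Hecke $L$-function $L(s,f) = \sum_{\mathfrak{m}} C(\mathfrak{m}) N(\mathfrak{m})^{-s}$, its symmetric square $L(s,\mathrm{sym}^2 f)$, and the Rankin–Selberg convolution $L(s, f\times f) = \zeta_F(s) L(s,\mathrm{sym}^2 f)$, all normalized analytically. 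One extracts from a smoothed partial sum $\sum_{N(\mathfrak{m}) \leq X} C(\mathfrak{m})^2 \Phi(N(\mathfrak{m})/X)$ a lower bound of size $\asymp X$ (up to $Q_f^{\epsilon}$ factors) via a contour shift picking up the pole of $\zeta_F(s)$ at $s=1$, using the known analytic continuation and bounds for $L(s,\mathrm{sym}^2 f)$ on $\mathrm{Re}(s) = 1$; meanwhile the hypothesized non-negativity of the $C(\mathfrak{m})$'s must be shown to force $\sum_{N(\mathfrak{m}) \leq X} C(\mathfrak{m})^2$ to be \emph{small}, which is where the subconvex input enters.

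The key analytic ingredient, and the reason the two exponents differ, is a subconvex (or convexity, in the general-weight case) bound for $L(1/2 + it, f)$ over the number field $F$. In case (i), when all $k_j$ are even, the completed $L$-function is self-dual with sign $+1$ in the relevant normalization, so one can exploit an approximate functional equation together with the Hybrid subconvexity bound of the shape $L(1/2, f) \ll_{n,\epsilon} Q_f^{1/4 - \delta + \epsilon}$; the Hilbert-modular subconvexity exponent available (e.g. from the work on subconvexity for $GL(2)$ $L$-functions over number fields) combined with the Cauchy–Schwarz / large-sieve step yields $X \asymp Q_f^{9/20}$, mirroring how $3/8$ arises in the elliptic case but with a weaker subconvexity saving over a general totally real field. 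In case (ii), without the eveness hypothesis one may lose self-duality or the precise sign, so one falls back on the convexity bound $L(1/2,f) \ll Q_f^{1/4+\epsilon}$, which only yields $X \asymp Q_f^{1/2}$. In both cases the bridge between "all coefficients up to $X$ nonnegative" and an upper bound for a short second moment is a Hal\'asz–Tenenbaum / Matomäki-type argument: nonnegativity of $C(\mathfrak{m})$ on $[1,X]$ lets one bound $\sum_{N(\mathfrak{m})\le X}C(\mathfrak{m})$ from below by the same second moment (Cauchy–Schwarz), while bounding it from above by a contour integral of $L(s,f)$ shifted to $\mathrm{Re}(s)=1/2$, where subconvexity gives the saving.

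Concretely, the steps I would carry out are: (1) record the normalized Dirichlet series and their functional equations over $F$, with the analytic conductor $Q_f$ explicit; (2) prove the lower bound $\sum_{N(\mathfrak{m}) \leq X} C(\mathfrak{m})^2 \gg_{n,\epsilon} X^{1-\epsilon}$ using the Rankin–Selberg pole, valid uniformly for $X \geq Q_f^{\epsilon}$; (3) assuming $C(\mathfrak{m}) \geq 0$ for $N(\mathfrak{m}) \leq X$, use partial summation plus a Perron-type formula for $L(s,f)$ and shift the contour to $\mathrm{Re}(s) = 1/2$, invoking the (sub)convexity bound to get $\sum_{N(\mathfrak{m}) \leq X} C(\mathfrak{m}) \ll_{n,\epsilon} (X^{1/2} + X/Q_f^{\theta'})\, Q_f^{\epsilon}$ for the appropriate $\theta'$; (4) combine (2), (3) and Cauchy–Schwarz on $\sum C(\mathfrak{m})^2 \leq (\max_{N(\mathfrak{m})\le X}|C(\mathfrak{m})|) \sum C(\mathfrak{m})$, using the Ramanujan-on-average bound $|C(\mathfrak{m})| \ll_\epsilon N(\mathfrak{m})^\epsilon$ at full level (Blasius / Shimura), to reach a contradiction once $X$ exceeds the stated threshold. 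The main obstacle I anticipate is step (3): controlling the contour integral uniformly in all $n$ archimedean parameters of $Q_f$ and in the field $F$, since the archimedean factors of the Hilbert $L$-function involve a product of $n$ Gamma factors, and one must ensure the subconvexity saving is genuinely in the full analytic conductor $Q_f \asymp |d_F| \prod_j k_j^2$ and not merely in one aspect; handling the general-weight case (ii), where the root number and self-duality can fail, requires care in choosing the test function in the approximate functional equation so that the main term does not vanish. A secondary technical point is the uniform treatment of the Rankin–Selberg lower bound in step (2) over varying $F$, which needs the residue of $\zeta_F(s)$ at $s=1$ and a lower bound for $L(1,\mathrm{sym}^2 f)$ that is polynomial in $Q_f^{-\epsilon}$, available from Hoffstein–Lockhart-type zero-free regions for $GL(3)$ $L$-functions over number fields.
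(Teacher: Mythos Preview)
Your plan diverges substantially from the paper's argument, and as written it contains a genuine gap in case~(ii) and an unjustified claim in case~(i).

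First, the paper does \emph{not} use any subconvexity, nor Rankin--Selberg, nor a second-moment lower bound. Both exponents are obtained from the \emph{convexity} bound for $L(s,f)$ alone. The crucial observation you are missing is the Hecke relation $\widetilde{C}(\mathfrak{p})^2 = 1 + \widetilde{C}(\mathfrak{p}^2)$: if $C(\mathfrak{m})\ge 0$ for all $N(\mathfrak{m})\le y$, then $\widetilde{C}(\mathfrak{p})\ge 1$ (not merely $\ge 0$) for every prime with $N(\mathfrak{p})\le y^{1/2}$. In case~(i), where Ramanujan gives $|\widetilde{C}(\mathfrak{p})|\le 2$, one then runs the Kowalski--Lau--Soundararajan--Wu amplification: an auxiliary multiplicative function $h_y$ together with the Dickman-function estimate shows that $\sum_{N(\mathfrak{m})\le y^u}\widetilde{C}(\mathfrak{m}) \gg y^u$ for any $u<\kappa$ with $\kappa>10/9$. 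Comparing this with the convexity upper bound for the log-weighted sum $S(f,y^u)$ at $u=10/9$ gives $y\ll Q_f^{9/20+\epsilon}$. The exponent $9/20 = (9/10)\cdot(1/2)$ comes from this $10/9$ amplification, not from any subconvex saving. In case~(ii) one simply takes $u=1$ and uses $\widetilde{C}(\mathfrak{p})\ge 1$ directly to get $S(f,y)\gg y$, whence $y\ll Q_f^{1/2+\epsilon}$.

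Your step~(4), the inequality $\sum C(\mathfrak{m})^2 \le (\max_{N(\mathfrak{m})\le X}|C(\mathfrak{m})|)\sum C(\mathfrak{m})$, requires the \emph{pointwise} Ramanujan bound $|C(\mathfrak{m})|\ll N(\mathfrak{m})^\epsilon$ (you call it ``on average'', but what you wrote and what you need is pointwise). This is exactly what is not known in case~(ii); with only $|\widetilde{C}(\mathfrak{p})|\le N(\mathfrak{p})^\vartheta$ for some $\vartheta>0$ the comparison yields an exponent strictly worse than $1/2$. So your argument for~(ii) does not close. For case~(i), even granting Ramanujan, your chain $X^{1-\epsilon}\ll\sum C^2\ll X^\epsilon\sum C\ll X^{1/2+\epsilon}Q_f^{1/4+\epsilon}$ gives only $X\ll Q_f^{1/2+\epsilon}$ from convexity; you then assert that some Hilbert subconvexity bound improves this to $9/20$, but you neither specify the saving ($\delta=1/40$ in the full conductor would be needed) nor cite a result delivering it uniformly in all weights and in $d_F$. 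The paper sidesteps this entirely: $9/20$ is obtained with convexity by exploiting the extra rigidity $\widetilde{C}(\mathfrak{p})\ge 1$, which your Rankin--Selberg/Cauchy--Schwarz scheme never uses.
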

The bound in the case (i) is stronger as the Ramanujan conjecture is known in this case but it seems not yet in case (ii). 

The main difference in our approach with that in \cite{jm} is that instead of using the bound $|\widetilde{C}(\mathfrak{p}) | + |\widetilde{C}(\mathfrak{p}^2)| \geq 1/2$ (see \cite[Prop 4.5]{jm}) for certain `good' primes $\mathfrak{p}$, we work directly with the Hecke relation $\widetilde{C}(\mathfrak{p})^2 -\widetilde{C}(\mathfrak{p}^2)=1$ (see \cite[2.23]{shimu}), where
\[\widetilde{C}(\mathfrak{m}):= \frac{C(\mathfrak{m})}{N(\mathfrak{m})^{\frac{k_0-1}{2}}} \quad  \text{and} \quad k_0:= \max \{k_1,k_2,...,k_n\}.\]
 Like other results available in the topic, we consider upper and lower bounds of a suitable weighted partial sum of normalized Fourier coefficients $\widetilde{C}(\mathfrak{m})$:
\begin{equation} \label{S(f,x)}
 S(f,x) := \sum_{N(\mathfrak{m})\leq x} \widetilde{C}(\mathfrak{m}) \log(\frac{x}{N(\mathfrak{m})}).
\end{equation}
 Using the convexity principle for automorphic $L$-functions and Perron's formula (see eg. \cite[chapter 5]{iwaniec}) we get an upper bound of $S(f,x)$ in terms of $Q_{f}$ and $x$. For the lower bound of $S(f,x)$ in the first case of \thmref{first sin chng}, we adopt a method similar in the spirit of \cite[Theorem 1]{sound}. The introduction of the weighted $\log$ in \eqref{S(f,x)} is necessary to deal with convergence issues while working with the Perron formula, unlike the case in \cite{sound}. To find a lower bound, we work with 
\[ T(f,x):= \sum_{N(\mathfrak{m}) \leq x} \widetilde{C}(\mathfrak{m})\]
and recover a corresponding lower bound of $S(f,x)$ by partial summation.

In the second result of this paper, we extend Theorem 1.1 of \cite{km} to the case of primitive Hilbert cusp forms. The method used here relies upon that in \cite[Theorem 1.1]{km}.
\begin{thm} \label{ratio}
Let $k_1, k_2,...,k_n$ be all even and $f$ be an adelic Hilbert newform of weight $k=(k_1,k_2,...,k_n)$ and full level. Then one has,
\[\lim_{x \rightarrow \infty} \frac{|\{\mathfrak{m} | N(\mathfrak{m})\leq x, C(\mathfrak{m})>0\}|}{|\{\mathfrak{m} | N(\mathfrak{m}) \leq x\}|} =\lim_{x \rightarrow \infty} \frac{|\{\mathfrak{m} | N(\mathfrak{m})\leq x, C(\mathfrak{m})<0\}|}{|\{\mathfrak{m} | N(\mathfrak{m}) \leq x\}|} =\frac{1}{2}. \]
\end{thm}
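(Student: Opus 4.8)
The plan is to follow the strategy of Matomäki–Radziwiłł (as in \cite[Theorem 1.1]{km}), which reduces a density statement of this type to estimates on the first and second moments of partial sums of the $\widetilde{C}(\mathfrak{m})$. Write $A^{\pm}(x) := |\{\mathfrak{m}: N(\mathfrak{m})\le x,\ \pm C(\mathfrak{m})>0\}|$ and $A^0(x)$ for the count of $\mathfrak{m}$ with $C(\mathfrak{m})=0$. Since the $k_i$ are all even the Ramanujan bound $|\widetilde{C}(\mathfrak{m})|\ll_\epsilon N(\mathfrak{m})^\epsilon$ holds, and since $f$ is a newform of full level the coefficients are multiplicative with $\widetilde{C}(\mathfrak{p})^2-\widetilde{C}(\mathfrak{p}^2)=1$; in particular $\widetilde{C}(\mathfrak{p})$ and $\widetilde{C}(\mathfrak{p}^2)$ cannot both vanish, so the zero set is sparse and $A^0(x)=o(x)$. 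The first step is therefore to record: (a) by the prime ideal theorem for the Rankin–Selberg $L$-function $L(s, f\times f)$, which has a simple pole at $s=1$, one has $\sum_{N(\mathfrak{m})\le x}\widetilde{C}(\mathfrak{m})^2 \sim c_f\, x$ for some $c_f>0$; (b) by the analogous analysis of $L(s,f)$ (entire, of finite order) together with Perron's formula — exactly the mechanism already invoked in the paper for the upper bound on $S(f,x)$ — one gets $\sum_{N(\mathfrak{m})\le x}\widetilde{C}(\mathfrak{m}) = o(x)$.

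The heart of the argument is a \emph{short-interval} second-moment bound: for suitable $y=y(x)\to\infty$ with $y=x^{o(1)}$,
\[
\frac{1}{x}\int_x^{2x}\Big|\sum_{t< N(\mathfrak{m})\le t+ty}\widetilde{C}(\mathfrak{m})\Big|^2\,\frac{dt}{t}\ \longrightarrow\ 0 \qquad \text{as } x\to\infty
\]
would be too strong; instead, following \cite{km}, one wants the \emph{weaker} statement that this averaged second moment in windows of multiplicative length $1+1/y$ is $\asymp$ the ``expected'' size, which combined with the first-moment cancellation forces the partial sums $T(f,t)=\sum_{N(\mathfrak{m})\le t}\widetilde{C}(\mathfrak{m})$ to change sign on a positive proportion of dyadic windows, and more precisely to be $o(t)$ while oscillating. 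Concretely: if one shows $\int_1^x T(f,t)^2\,dt = o(x^3)$ (which follows from $\sum\widetilde C(\mathfrak m)=o(x)$ by Cauchy–Schwarz is \emph{not} enough — one needs it via a mean-square / large-sieve-type input on the Dirichlet series $\sum \widetilde C(\mathfrak m)N(\mathfrak m)^{-s}$ on vertical lines), then a now-standard argument (sign changes of $T(f,t)$ are frequent because its square-mean is small relative to the square-mean of the individual coefficients) yields that in a positive density of short intervals the sum over $\mathfrak{m}$ of $\widetilde C(\mathfrak m)$ has both signs, hence so do individual coefficients; an averaging/symmetrisation over the field automorphisms or over twists is not needed here because $\widetilde C$ is real.

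The final step is a counting argument: set $e^\pm(x) := A^\pm(x) - \tfrac12|\{\mathfrak m: N(\mathfrak m)\le x\}|$. From $A^+(x)+A^-(x)+A^0(x)=|\{N(\mathfrak m)\le x\}|\sim \kappa_F x$ (Landau's prime ideal / ideal-counting theorem, with $\kappa_F$ the residue of $\zeta_F$) and $A^0(x)=o(x)$, it suffices to show $A^+(x)-A^-(x)=o(x)$. Write $A^+(x)-A^-(x)=\sum_{N(\mathfrak m)\le x}\operatorname{sgn}(\widetilde C(\mathfrak m))$; the point is that $\operatorname{sgn}(\widetilde C(\mathfrak m))$ is \emph{not} multiplicative, so one cannot attack it directly by an $L$-function. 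Instead one transfers the oscillation of $T(f,t)$: by the short-interval machinery above, on all but a negligible set of $t\in[1,x]$ the block $\sum_{t<N(\mathfrak m)\le t(1+1/y)}\widetilde C(\mathfrak m)$ is small compared to the number of ideals in the block, and the second-moment lower bound guarantees that within a positive proportion of such blocks the coefficients are not all of one sign; a dyadic decomposition then gives $\sum_{N(\mathfrak m)\le x}\operatorname{sgn}(\widetilde C(\mathfrak m))=o(x)$. I expect \textbf{the main obstacle} to be establishing the short-interval / mean-square estimate for $\sum \widetilde C(\mathfrak m) N(\mathfrak m)^{-s}$ with enough uniformity: over a number field one must handle the ideal-counting error terms and the fact that $L(s,f\times f)$ has degree $2n$ (so the zero-free region and convexity inputs are weaker), and one must do this without assuming Ramanujan in general — though here, since all $k_i$ are even, Ramanujan is available, which is presumably why Theorem 1.2 is stated only in that case.
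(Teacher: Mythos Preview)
Your proposal contains a decisive misconception that sends you down an unnecessarily difficult path. You write that ``$\operatorname{sgn}(\widetilde C(\mathfrak m))$ is \emph{not} multiplicative, so one cannot attack it directly by an $L$-function.'' This is false: since $f$ is a newform the coefficients $\widetilde C(\mathfrak m)$ are real and multiplicative, and the sign of a product of real numbers is the product of the signs (with $\operatorname{sgn}(0)=0$). Hence $g(\mathfrak m):=\operatorname{sgn}(\widetilde C(\mathfrak m))$ is a genuine multiplicative function on $\mathcal I$ with values in $\{-1,0,1\}$. The paper exploits exactly this: it applies Wirsing/Hal\'asz--type mean value theorems on the arithmetic semigroup $\mathcal I$ (quoted from Lucht--Reifenrath \cite{mean}) directly to $g$ and to $|g|$. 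The divergence of $\sum_{\mathfrak p}(1-g(\mathfrak p))/N(\mathfrak p)$ --- equivalently $\sum_{C(\mathfrak p)<0}1/N(\mathfrak p)=\infty$, which follows from Sato--Tate \cite{sato} --- forces $M(g)=0$, giving $A^+(x)-A^-(x)=o(x)$; the convergence of $\sum_{C(\mathfrak p)=0}1/N(\mathfrak p)$, which follows from Serre's density--zero result \cite{serre}, gives the mean value of $|g|$ and hence $A^+(x)+A^-(x)$. No Rankin--Selberg input, no short--interval machinery, and no second--moment estimates are used.

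Beyond that wrong turn, the sketch has further gaps. Your argument for $A^0(x)=o(x)$ (``$\widetilde C(\mathfrak p)$ and $\widetilde C(\mathfrak p^2)$ cannot both vanish'') is insufficient: a positive proportion of primes with $\widetilde C(\mathfrak p)=0$ would still make a positive proportion of squarefree $\mathfrak m$ satisfy $\widetilde C(\mathfrak m)=0$; one genuinely needs Serre's theorem here. More seriously, the proposed transfer from oscillation of $T(f,t)=\sum_{N(\mathfrak m)\le t}\widetilde C(\mathfrak m)$ to cancellation in $\sum_{N(\mathfrak m)\le x}\operatorname{sgn}(\widetilde C(\mathfrak m))$ does not work as stated: small or oscillating weighted sums say nothing about sums of signs when the magnitudes $|\widetilde C(\mathfrak m)|$ vary. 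In \cite{km} the short--interval results are used for the \emph{stronger} statement about sign changes in almost all short windows; the $\tfrac12$--$\tfrac12$ density there, as here, comes from the multiplicativity of the sign together with a Hal\'asz--type theorem.
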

The restriction of the weight in \thmref{ratio} is due to the use of Sato-Tate theorem \cite[pp 1-6]{sato}, which is available in the case when all of $k_1, k_2, ..., k_n$ are even. We also make a note of the fact that our proofs of both \thmref{first sin chng} and \thmref{ratio} can be generalized to an arbitrary fixed level $\mathfrak{n}$ and would get same result. For simplicity we restrict ourselves to the case of full level.      

\textbf{Acknowledgement.} The author thanks Prof. Soumya Das for many valueable discussions and suggestions. He also thanks NBHM for financial support and IISc, Bengaluru, where this work was done.
\section{Notation and preliminaries}\label{prelim}
The setting of the paper is as follows (see \cite{shimu} for detailed discussion): let $F$ be a totally real number field of degree $n$ and $\mathcal{O}_{F}$ be the ring of integers of $F$. Throughout this paper, the integral ideals and prime ideals of $F$ will be denoted by gothic symbols like $\mathfrak{m}$ and $\mathfrak{p}$ respectively. We will   denote the set of all integral ideals and prime ideals of $F$ by $\mathcal{I}$ and $\mathcal{P}$ respectively. Let $h$ be the narrow class number of $F$ and $\{c_{\nu}:= t_{\nu } \mathcal{O}_{F}\}_{1}^{h}$ be the complete set of representatives of the narrow class group, where $t_{\nu}$ being finite ideles. Let $\mathcal{D}_{F}$ be the different ideal of $F$. For each $c_{\nu}$ consider the following subgroup of $GL_{2}(F)$:
\begin{align*}
\Gamma(c_{\nu} \mathcal{D}_{F}, \mathcal{O}_{F})=\Big\{\left(\begin{array}{cc}
   a & b\\
   c & d
\end{array}\right) \mid a,d \in \mathcal{O}_{F}, c\in c_{\nu} \mathcal{D}_{F}, b\in c_{\nu}^{-1} \mathcal{D}_{F}^{-1},  ad-bc \in \mathcal{O}_{F}^{*}\Big\}.
\end{align*}
A classical Hilbert cusp form $f_{\nu}$ of weight $(k_1,k_2,..., k_n)$ on $\Gamma(c_{\nu} \mathcal{D}_{F}, \mathcal{O}_{F})$ has the following Fourier expansion:
\[f_{\nu} (z)= \sum_{0 \ll \eta \in c_{\nu}} a_{\nu}(\eta) \exp(Tr(\eta z)).\]
To talk about newforms, following \cite{shimu}, we associate an $h$-tuple $(f_{1},f_{2},...,f_{h})$ to an adelic Hilbert cusp form $f$. Recall that $f$ is associated with an automorphic form on $GL_{2}(\mathbb{A}_{F})$, where $\mathbb{A}_{F}$ is the adele ring of $F$ and the Fourier coefficients Fourier coefficients $\{C(\mathfrak{m}) \}_{\mathfrak{m}}$ of $f$ are given by the following relation (see \cite[equation: 2.17, 2.22]{shimu})
\[C(\mathfrak{m}) =a_{\nu}(\eta) \eta^{-k_0/2} N(\mathfrak{m})^{k_0/2},\] 
where $\mathfrak{m}=\eta c_{\nu}^{-1}$ for a unique $\nu$ and some totally positive element $\eta$ in $F$. The Hecke-theory for adelic Hilbert newforms (sometimes referred to as primitive forms) was established by Shimura in \cite{shimu}. When $f$ is an adelic Hilbert newform, usually one has the normalization $C(\mathcal{O}_{F}) = 1$ (see \cite[pp 650]{shimu}), which we would assume throughout this paper.
\subsection{General notation}
Suppose $ h(x) >0$ is defined on a subset $ B$ of $\mathbb{R}$ and let $g(x)$ be such that, $g(x): B \mapsto \mathbb{R}$. Whenever we write $g(x) \ll h(x)$ or $g(x) = O(h(x))$ or $g(x) \ll_\epsilon h(x)$, it will always mean that
\[ |g(x)| \leq M \cdot h(x), \quad \text{for all  } x \in B \text{  and for some  } M >0 ,\]
in the last case $M$ may depend on $\epsilon$.
Let $u(x), v(x) : \mc B \mapsto \mathbb{R}$, where $\mc B $ is a subset of $\mathbb{R}$. The notations $u(x)= o(v(x))$ or $u(x) \sim v(x)$ respectively mean that 
\[ \underset{x \rightarrow \infty} \lim (u(x)/v(x)) =0 \quad \text{or} \quad \underset{x \rightarrow \infty} \lim (u(x)/v(x)) =1 . \]
Whenever we use $\sum^{\#}$, it will always signify that the summation is restricted to square-free integral ideals.
\subsection{L- function of an adelic Hilbert newform}
Let $f$ be an adelic Hilbert newform of weight $(k_1,k_2,..., k_n)$ and full level. Let $\{C(\mathfrak{m})\}_{\mathfrak{m}}$ denote its Fourier coefficients. Then the normalized $L$-function attached to $f$ is an absolutely convergent Dirichlet series for $\Re s >1$ (see \cite[section 4.4]{raghuram} for the details of this subsection), given by
\[L(s,f):= \underset{\mathfrak{m} \in \mathcal{I}} \sum \frac{\widetilde{C}(\mathfrak{m})}{N(\mathfrak{m})^s}\]
where $\mathcal{I}$ and $\widetilde{C}(\mathfrak{m})$ are same as defined before. It is well known that $L(s,f)$ admits an Euler product:
\[L(s,f)= \underset{\mathfrak{p} \in \mathcal{P}}\prod (1- \widetilde{C}(\mathfrak{p}) N(\mathfrak{p})^{-s}+  N(\mathfrak{p})^{-2s} )^{-1} \]
and it can be analytically continued to the whole complex plane $\mathbb{C}$. Let us put
\[L_{\infty}(s,f):= N(\mathcal{D}_{F}^2)^{\frac{s}{2}} \prod_{j=1}^{n} (2 \pi)^{-(s+\frac{k_{j}-1}{2})} \Gamma(s+\frac{k_{j} -1}{2}). \]
The completed $L-$function $\Lambda$ is  then defined by
\[ \Lambda(s,f) = L(s,f) L_{\infty}(s,f) ,\]
which satisfies the functional equation
\[\Lambda(s,f)= i^{\sum_{j}k_j} \Lambda(1 -s, f).\]
\subsection{Analytic conductor}
With the given data in the previous subsection we define the analytic conductor at $s= 1/2$ (here we follow \cite[chapter 5]{iwaniec}, where these objects are defined for a more general $L$-function) $Q_{f}$ of $L(f,s)$ (or $f$ for brevity) as 
\[ Q_{f} := N(\mathcal{D}_{F}^2) \prod_{j=1}^{n} (\frac{k_j+5}{2})(\frac{k_j+7}{2}) \asymp (\prod_{j=1}^{n} k_{j})^2 N(\mathcal{D}_{F}^2).\]
\section{Proof of Theorem 1.1}
Let $f$ be as in \thmref{first sin chng}. Let $y>0$ be such that for all $\mathfrak{m} \in \mathcal{I}$ with $N(\mathfrak{m}) \leq y$ we have $C(\mathfrak{m}) \geq 0 $. We will estimate $y$ in terms of $Q_{f}$ with an implied absolute constant (hence we may assume $y$ to be bigger than some absolute constant at some appropriate place) from the comparison of upper and lower bounds of the sum (for a suitable $x= y^{\theta}$ for some $\theta$ to be specified later)
\[ S(f,x) := \sum_{N(\mathfrak{m})\leq x} \widetilde{C}(\mathfrak{m}) \log(\frac{x}{N(\mathfrak{m})}).\]
\subsection{Upper bound}
First let us recall a convexity bound result (see Lemma 3.7, \cite{qu}):
let $\epsilon >0$ be arbitrary and $ 0 < \sigma < 1$, where $s= \sigma+it$. Then we have
\begin{equation} \label{harcos}
 L(\sigma +it, f) \ll_{\epsilon} \big ( (1+ | t |)^{2n+1} Q_{f} \big )^{\frac{1-\sigma}{2} + \epsilon}. 
\end{equation}
From Perron's formula we get
\[ S(f,x) = \sum_{N(\mathfrak{m}) \leq x} \widetilde{C}(\mathfrak{m})\log(\frac{x}{N(\mathfrak{m})}) = \frac{1}{2 \pi i} \int_{(2)} L(s,f) \frac{x^{s}}{s^2} ds. \]
Here $(2)$ in the limit of the integral means the the contour of the integral is $\Re s= 2$. Using \eqref{harcos} it is clear that the integral is absolutely convergent for $\Re s \geq \frac{2n}{2n+1}$. So we shift the line of integration to $\sigma = \frac{2n}{2n+1} $ (horizontal integrals do not contribute owing to \eqref{harcos}). Further using \eqref{harcos} we obtain the estimate
\begin{equation} \label{ub}
S(f,x) \ll_{\epsilon} Q_{f}^{\frac{1}{2(2n+1)}+ \epsilon} x^{\frac{2n}{2n+1}}.
\end{equation}

\subsection{Lower bound}
Let us recall the function $T(f,x)$ defined in the introduction:
\begin{equation} \label{T}
T(f,x):= \sum_{N(\mathfrak{m}) \leq x} \widetilde{C}(\mathfrak{m}) .
\end{equation}
In this subsection we will find a lower bound of $T(f,x)$ for some suitable $x$. At this point let us recall the following result about the coefficients $\widetilde{C}(\mathfrak{m})$ (see \cite[equation 2.23]{shimu}): for any unramified prime ideal $\mathfrak{p}$ (i.e. $\mathfrak{p} \nmid \mathcal{D}_{F}$), one has the Hecke relation
\begin{equation} \label{C(p)}
\widetilde{C}(\mathfrak{p})^2 = 1+ \widetilde{C}(\mathfrak{p}^{2}).
\end{equation}
Hence for all unramified primes $\mathfrak{p}$ satisfying $N(\mathfrak{p})\leq y^{\frac{1}{2}}$, we have $\widetilde{C}(\mathfrak{p}) \geq 1$. Following \cite{sound} let us introduce an auxiliary multiplicative function $h\equiv h_y : \mathcal{I} \rightarrow \mathbb{R}$ defined by,
\begin{equation}
h_y(\mathfrak{p})=
\begin{cases}
1 \quad\quad \text{if  } N(\mathfrak{p})\leq y^{\frac{1}{2}} ;\\
0 \quad \quad\text{if  } y^{\frac{1}{2}} < N(\mathfrak{p})\leq y ;\\
- 2 \quad \text{if  } N(\mathfrak{p}) > y
\end{cases} 
\end{equation}
and $h_y(\mathfrak{p}^v)=0$ for $v\geq 2$. Recall that for adelic Hilbert newforms of weight $(k_1, k_2,..., k_n) \in (2 \mathbb{Z})^{n}$, by the Ramanujan-Petersson bound one has $|\widetilde{C}(\mathfrak{p})| \leq 2$  (see \cite[Theorem 1]{bla}). We will now prove the following lemma for further use (see \cite[Lemma 2.1]{sound} for the case when $F= \mathbb{Q}$).
\begin{lem} \label{sum}
For any $\epsilon >0$, we have 
\begin{equation} \label{h}
\sum_{N(\mathfrak{m}) \leq y^u } h_y(\mathfrak{m})= \frac{c_F}{\zeta_F(2)} y^u (\rho(2u)- 2 \log u) \{ 1+ O(\frac{1}{\log y}) \}
\end{equation}
uniformly for $1 \leq u \leq \frac{3}{2}$, where $\zeta_F$ is the Dedekind zeta function, $c_F$ is the residue $\zeta_F$ at the pole $s=1$ and $\rho (u)$ is the Dickman function, defined as the unique continuous solution of the difference-differential equation
\[u \rho '(u) +\rho (u-1) =0 \quad (u>1), \quad \rho(u)=1 \quad (0< u \leq 1).\] 
\end{lem}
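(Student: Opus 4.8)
The plan is to use the explicit three‑valued shape of $h_y$ to reduce the sum to a count of squarefree smooth ideals together with a prime‑counting term, and then to apply the prime ideal theorem and a Dickman--de Bruijn type estimate over $F$. First, since $h_y$ is multiplicative, supported on squarefree ideals, and vanishes on primes $\mathfrak p$ with $y^{1/2}<N(\mathfrak p)\le y$, every ideal $\mathfrak m$ with $h_y(\mathfrak m)\ne 0$ factors uniquely as $\mathfrak m=\mathfrak a\mathfrak b$, where $\mathfrak a$ is squarefree with every prime factor of norm $\le y^{1/2}$ (a ``$y^{1/2}$-smooth'' ideal) and $\mathfrak b$ is squarefree with every prime factor of norm $>y$; then $h_y(\mathfrak m)=(-2)^{\omega(\mathfrak b)}$. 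Since $N(\mathfrak m)\le y^u\le y^{3/2}$ and two primes of norm $>y$ would force $N(\mathfrak b)>y^2$, we have $\omega(\mathfrak b)\le 1$, whence
\[
\sum_{N(\mathfrak m)\le y^u}h_y(\mathfrak m)=\Sigma_1-2\Sigma_2,
\]
with $\Sigma_1=|\{\mathfrak a:\, N(\mathfrak a)\le y^u,\ \mathfrak a\ \text{squarefree and}\ y^{1/2}\text{-smooth}\}|$ and
\[
\Sigma_2=\sum_{\substack{\mathfrak a\ \text{squarefree},\ y^{1/2}\text{-smooth}\\ N(\mathfrak a)\le y^{u-1}}}\bigl(\pi_F(y^u/N(\mathfrak a))-\pi_F(y)\bigr),
\]
$\pi_F(t)$ being the number of prime ideals of norm $\le t$; no coprimality condition on $\mathfrak p=\mathfrak b$ is needed, since automatically $N(\mathfrak p)>y\ge N(\mathfrak q)$ for every prime $\mathfrak q\mid\mathfrak a$. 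The hypothesis $u\le 3/2$ enters here: then $N(\mathfrak a)\le y^{u-1}\le y^{1/2}$ throughout $\Sigma_2$, so the smoothness requirement there is automatic and $\mathfrak a$ ranges over all squarefree ideals of norm $\le y^{u-1}$.

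To handle $\Sigma_1$ I would invoke the number‑field analogue of the Dickman--de Bruijn estimate for squarefree smooth ideals, namely
\[
\Sigma_1=\frac{c_F}{\zeta_F(2)}\,y^u\,\rho(2u)\Bigl\{1+O\bigl(\tfrac{1}{\log y}\bigr)\Bigr\}
\]
uniformly for $1\le u\le 3/2$ (so that only the values of $\rho$ on $[2,3]$ occur). This follows from a Buchstab‑type recursion on the prime factors of $\mathfrak a$ of norm exceeding $y^{1/2}$ — the recursion is finite, since the norm available to the remaining smooth part drops by a factor at least $y^{1/2}$ at each step, so at most two steps occur — combined with the count $|\{\mathfrak a\ \text{squarefree}:\, N(\mathfrak a)\le z\}|=\tfrac{c_F}{\zeta_F(2)}z+O(z^{\vartheta})$ for some $\vartheta<1$ (read off from $\zeta_F(s)/\zeta_F(2s)$, which has a simple pole of residue $c_F/\zeta_F(2)$ at $s=1$) and the prime ideal theorem with classical error term; the recursion reproduces precisely the difference‑differential equation defining $\rho$.

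For $\Sigma_2$, smoothness being automatic, partial summation against $|\{\mathfrak a\ \text{squarefree}:\, N(\mathfrak a)\le t\}|=\tfrac{c_F}{\zeta_F(2)}t+O(t^{\vartheta})$ together with $\pi_F(t)=\mathrm{Li}(t)+O\bigl(t\exp(-c\sqrt{\log t})\bigr)$ yields
\[
\Sigma_2=\frac{c_F}{\zeta_F(2)}\,y^u\int_{1}^{y^{u-1}}\frac{dt}{t\,\log(y^u/t)}+O\Bigl(\frac{y^u}{\log y}\Bigr),
\]
the contribution of $\pi_F(y)\cdot|\{\mathfrak a\ \text{sqfree}:N(\mathfrak a)\le y^{u-1}\}|=O(y^u/\log y)$ being absorbed into the error. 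The substitution $t=y^v$ turns the integral into $\int_{0}^{u-1}\frac{dv}{u-v}=\log u$, so $\Sigma_2=\tfrac{c_F}{\zeta_F(2)}\,y^u\log u+O(y^u/\log y)$. Combining,
\[
\sum_{N(\mathfrak m)\le y^u}h_y(\mathfrak m)=\Sigma_1-2\Sigma_2=\frac{c_F}{\zeta_F(2)}\,y^u\bigl(\rho(2u)-2\log u\bigr)+O\Bigl(\frac{y^u}{\log y}\Bigr),
\]
which gives \eqref{h}.

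The main obstacle is the estimate for $\Sigma_1$: one must carry the squarefree constraint through the smooth‑number machinery over $F$ and, more importantly, make the error term uniform in $u$ on $[1,3/2]$ down to the size $O(1/\log y)$. All the remaining ingredients — the prime ideal theorem with classical error, partial summation, and counting squarefree ideals via $\zeta_F(s)/\zeta_F(2s)$ — are standard, and over $\mathbb Q$ the entire computation specializes to \cite[Lemma 2.1]{sound}, whose argument adapts once the number‑field smooth‑ideal count is in place.
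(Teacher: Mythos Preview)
Your proof is correct and follows essentially the same route as the paper: the same decomposition $\Sigma_1-2\Sigma_2$ into a squarefree $y^{1/2}$-smooth count and a large-prime term, with the observation that $u\le 3/2$ forces $\omega(\mathfrak b)\le 1$ and makes the smoothness condition in $\Sigma_2$ automatic. The only cosmetic differences are that the paper evaluates $\Sigma_2$ by summing over the prime first and applying the Mertens-type estimate $\sum_{N(\mathfrak p)\le x}1/N(\mathfrak p)=\log\log x+B+o(1)$ rather than your PNT-plus-partial-summation, and it supplies the ``main obstacle'' you flag for $\Sigma_1$ by citing Moree's number-field estimate $\psi(x,x^\delta)=c_F\rho(1/\delta)x+O(x/\log x)$ and then passing to squarefree ideals via the M\"obius argument of Ivi\'c.
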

\begin{proof}
For $\mathfrak{m} \in \mathcal{I}$ define
\[P(\mathfrak{m}) := \max \{N(\mathfrak{p}) | \mathfrak{p} \in \mathcal{P}, \mathfrak{p}| \mathfrak{m} \}.\]
We also define 
\[ \psi(x,y):= | \{\mathfrak{m} : N(\mathfrak{m}) \leq x , P(\mathfrak{m}) \leq y \}| . \]
 For $1 \leq u \leq 3/2$, from the definition of $h_{y}$ we get,
\begin{equation} \label{hnew}
\sum_{N(\mathfrak{m})\leq y^{u}} h_{y}(\mathfrak{m}) = \psi^{\#}(y^u, y^{\frac{1}{2}}) - 2 \sum_{y\leq N(\mathfrak{p}) \leq y^u} \underset{N(\mathfrak{l}) \leq \frac{y^u}{N(\mathfrak{p})}}{\sideset{}{^\#} \sum} 1 .
\end{equation}
Here the sign $\#$ signifies that the sum is taken over square-free integral ideals. Let us first estimate the second term of \eqref{hnew}. We state the following result whose proof is given after this lemma. 
\begin{equation} \label{sumnorm}
\underset{N(\mathfrak{l}) \leq x}{\sideset{}{^\#}\sum} 1 = \frac{c_F}{\zeta_{F} (2)} x + O(\frac{x}{\log x}),
\end{equation}
where $c_F$ is the residue of the Dedekind zeta function $\zeta_F$ at the pole $s=1$. For the second term of the right hand side of \eqref{hnew}, arguing exactly as in \cite[pp 8-9]{sound} gives an upper bound
\begin{equation} \label{second term}
\sum_{y\leq N(\mathfrak{p}) \leq y^u} {\underset{N(\mathfrak{l}) \leq \frac{y^u}{N(\mathfrak{p})}}{\sideset{}{^\#} \sum}} 1 = \frac{c_F}{\zeta_{F}(2)}y^{u} \log u + O(\frac{y^u}{\log y}).
\end{equation}
For \eqref{second term} we have also used the fact (see \cite[Prop 2]{lebacque}) that,
\begin{equation} \label{sum N(p)}
\sum_{N(\mathfrak{p}) \leq x} \frac{1}{N(\mathfrak{p})}= \log \log x +B + o(1),
\end{equation}
where $B$ is a constant depending only on $F$.

The first term in \eqref{hnew} is given by the following (the proof is given after this lemma):
\begin{equation} \label{original first term}
\psi^{\#}(y^{u},y^{\frac{1}{2}}) = \frac{c_{F}}{\zeta_{F}(2)} \cdot \rho(2u) y^{u} + O(\frac{y^{u}}{\log y}).
\end{equation}
Thus the \lemref{sum} follows from \eqref{second term} and \eqref{original first term}.  
\end{proof}
The proofs of the two results that we used in \lemref{sum} are verbatim with the case of integers (i.e. $\mathcal{O}_{F}= \mathbb {Z}$). However for convenience we give the proofs here.
\begin{prop}
With the notations as given above, one has
\[\underset{N(\mathfrak{l}) \leq x}{\sideset{}{^\#}\sum} 1 = \frac{c_F}{\zeta_{F} (2)} x + O(\frac{x}{\log x}).\]
\end{prop}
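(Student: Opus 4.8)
The plan is to run the classical argument for $\mathbb{Z}$ verbatim, with the ideal-theoretic M\"obius function $\mu_F$ and the Dedekind zeta function $\zeta_F$ in place of $\mu$ and $\zeta$, and with Landau's estimate for the ideal-counting function in place of $\lfloor t\rfloor = t + O(1)$. Write $A(t) := |\{\mathfrak{l}\in\mathcal{I}: N(\mathfrak{l})\le t\}|$; I will use the classical estimate $A(t) = c_F\, t + O(t^{1-1/n})$ together with the crude bound $A(t)\ll t$ for tail estimates.

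First, from the multiplicativity of $\mu_F$ (equivalently, from $\sum_{\mathfrak{m}}^{\#} N(\mathfrak{m})^{-s} = \zeta_F(s)/\zeta_F(2s)$ for $\Re s>1$) one has, for every integral ideal $\mathfrak{m}$,
\[ \sum_{\mathfrak{d}^2\mid\mathfrak{m}}\mu_F(\mathfrak{d}) = \begin{cases} 1 & \mathfrak{m}\ \text{squarefree},\\ 0 & \text{otherwise}.\end{cases} \]
Summing over $N(\mathfrak{m})\le x$ and interchanging the order of summation gives
\[ \sum_{N(\mathfrak{m})\le x}^{\#}1 \;=\; \sum_{N(\mathfrak{d})\le\sqrt{x}}\mu_F(\mathfrak{d})\, A\!\left(\tfrac{x}{N(\mathfrak{d})^2}\right). \]
Inserting $A(t)=c_F t + O(t^{1-1/n})$, the main term becomes
\[ c_F\, x\sum_{N(\mathfrak{d})\le\sqrt{x}}\frac{\mu_F(\mathfrak{d})}{N(\mathfrak{d})^2} \;=\; c_F\, x\left(\frac{1}{\zeta_F(2)} + O\!\Big(\sum_{N(\mathfrak{d})>\sqrt{x}}\frac{1}{N(\mathfrak{d})^2}\Big)\right) \;=\; \frac{c_F}{\zeta_F(2)}\, x + O(\sqrt{x}), \]
where the tail is $\ll x^{-1/2}$ by a dyadic decomposition using $A(T)\ll T$. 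The error term contributes
\[ \ll x^{1-1/n}\sum_{N(\mathfrak{d})\le\sqrt{x}}N(\mathfrak{d})^{-2+2/n}, \]
and since $-2+2/n\le 0$ for all $n\ge 1$, the inner sum is $\ll\sqrt{x}$ if $n=1$, $\ll\log x$ if $n=2$, and $\ll 1$ if $n\ge 3$ (the last because the exponent is then $<-1$, so the relevant Dirichlet series converges). In every case the total is $o(x/\log x)$, and likewise $\sqrt{x}=o(x/\log x)$; collecting terms yields $\sum_{N(\mathfrak{m})\le x}^{\#}1 = \frac{c_F}{\zeta_F(2)}\,x + O(x/\log x)$.

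There is no serious obstacle here; the only inputs that are not completely elementary are Landau's theorem $A(t)=c_F t + O(t^{1-1/n})$ on the distribution of integral ideals by norm, and the minor bookkeeping of the two error contributions in the small-degree cases $n=1,2$, where the power saving from Landau's bound is weakest --- but even there the resulting error is far below the claimed $O(x/\log x)$, so one could afford a considerably weaker ideal-counting estimate if desired.
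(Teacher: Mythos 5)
Your proof follows essentially the same route as the paper's: write the squarefree indicator as the M\"obius-square convolution $\sum_{\mathfrak{d}^2\mid\mathfrak{m}}\mu_F(\mathfrak{d})$, interchange the order of summation, insert the ideal-counting estimate, and bound the two resulting error pieces. The only real difference is that you feed in Landau's $A(t)=c_F t+O(t^{1-1/n})$ directly (a bound the paper also records in a remark), which actually makes the error bookkeeping cleaner than the paper's use of $A(t)=c_F t+O(t/\log t)$, since the latter requires some care with the factor $\log(x/N(\mathfrak{t})^2)$ when $N(\mathfrak{t})^2$ is close to $x$.
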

\begin{proof}
The M\"obius function $\mu: \mathcal{I} \mapsto \{1,0,-1 \}$ is defined by $\mu(\mathfrak{p})=-1$, $\mu(\mathfrak{p}^v)=0$ for $v \geq 2$ on $\mathcal{P}$ and extended multiplicatively to $\mathcal{I}$. It satisfies the usual properties
\begin{equation} \label{mu}
 \mu^{2} (\mathfrak{m}) = \underset{\mathfrak{l}^2 | \mathfrak{m}} \sum \mu (\mathfrak{l}) \quad \text{and} \quad \underset{\mathfrak{m} \in I} \sum \frac{\mu(\mathfrak{m})}{N(\mathfrak{m})^2} =\frac{1}{\zeta_{F}(2)}, 
\end{equation}
where $\zeta_{F}$ is the Dedekind zeta function. From \cite[Theorem 11.1.5]{mur} we write (please see the remark after this proof)
\[\underset{N(\mathfrak{l}) \leq x}{\sum} 1 = \frac{c_F}{\zeta_{F} (2)} x + O(\frac{x}{\log x}).\]
Now we calculate

\begin{align*}
 \underset{N(\mathfrak{l}) \leq x}{\sideset{}{^\#}\sum} 1= \underset{N(\mathfrak{l}) \leq x}{\sum} \mu^{2}(\mathfrak{l})= &\underset{N(\mathfrak{l}) \leq x}{\sum} \underset{\mathfrak{t}^2 | \mathfrak{l}}{\sum} \mu(\mathfrak{t})  \\
&= \underset{N(\mathfrak{t}^2) \leq x}{\sum} \mu(\mathfrak{t}) \big(\underset{N(\mathfrak{s}) \leq \frac{x}{N(\mathfrak{t})^2}}{\sum} 1 \big) \\
&= \underset{N(\mathfrak{t}^2) \leq x}{\sum} \mu(\mathfrak{t}) \big(c_{F} \frac{x}{N(\mathfrak{t}^2)}+ O(\frac{\frac{x}{N(\mathfrak{t}^2)}}{\log(\frac{x}{N(\mathfrak{t}^2)})})\big) \\
&= c_{F} x \cdot \underset{N(\mathfrak{t}^2) \leq x}{\sum} \frac{\mu({\mathfrak{t})}}{N(\mathfrak{t}^2)} +\underset{N(\mathfrak{t}^2) \leq x}{\sum} \frac{\mu({\mathfrak{t})}}{N(\mathfrak{t}^2)} O(\frac{x}{\log x}).  \\
&= \frac{c_F}{\zeta_{F} (2)} x + O(\frac{x}{\log x}).
\end{align*}
For the last equation we have used the fact that
\[\underset{N(\mathfrak{t}^2) \leq x}{\sum} \frac{\mu({\mathfrak{t})}}{N(\mathfrak{t}^2)}= \frac{1}{\zeta_{F}(2)}+O(x^{\frac{-1}{2}}).\]
\end{proof}
\textbf{Remark:} The error term above is actually of the magnitude $x^{1-\frac{1}{n}}$ (for eg. see \cite[Theorem 11.1.5]{mur}). However this error term bound will not benefit us because of the error term in the next result is of the magnitude $\frac{x}{\log x}$. 

\begin{prop}
With the notations as given above, one has
\[\psi^{\#}(y^{u},y^{\frac{1}{2}}) = \frac{c_{F}}{\zeta_{F}(2)} \cdot \rho(2u) y^{u} + O(\frac{y^{u}}{\log y}).
\]
\end{prop}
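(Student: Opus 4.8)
The plan is to pass from squarefree smooth ideals to \emph{all} smooth ideals by a M\"obius inversion, and then to invoke the number-field analogue of de Bruijn's estimate on integers free of large prime factors. Writing $\mathfrak{m}=\mathfrak{d}^2\mathfrak{n}$ in the identity $\mu^2(\mathfrak{m})=\sum_{\mathfrak{d}^2\mid\mathfrak{m}}\mu(\mathfrak{d})$ from \eqref{mu}, and noting that $P(\mathfrak{m})\le z$ forces $P(\mathfrak{d})\le z$ and $P(\mathfrak{n})\le z$, one obtains the basic identity
\[
\psi^{\#}(x,z)=\sum_{\substack{N(\mathfrak{d})^2\le x\\ P(\mathfrak{d})\le z}}\mu(\mathfrak{d})\,\psi\!\Big(\tfrac{x}{N(\mathfrak{d})^2},\,z\Big),
\]
which I shall apply with $x=y^{u}$ and $z=y^{1/2}$.

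The second ingredient I would establish is the asymptotic
\[
\psi(w,y^{1/2})=c_F\,\rho\!\Big(\tfrac{2\log w}{\log y}\Big)\,w+O\!\Big(\tfrac{w}{\log y}\Big)\qquad(1\le w\le y^{3/2}),
\]
with the understanding that for $w\le y^{1/2}$ the right-hand side is just $c_Fw+O(w^{1-1/n})$, which holds because every ideal of norm $\le w$ is then automatically $y^{1/2}$-smooth and $\rho\equiv1$ on $[0,1]$. For larger $w$ this is proved exactly as over $\mathbb{Z}$: an ideal of norm $\le w\le(y^{1/2})^{3}$ has at most two distinct prime factors of norm exceeding $y^{1/2}$ (three of them would force the norm to exceed $y^{3/2}$), so a two-term inclusion--exclusion expresses $\psi(w,y^{1/2})$ as $\#\{N(\mathfrak{m})\le w\}$ minus $\sum_{y^{1/2}<N(\mathfrak{p})\le w}\#\{N(\mathfrak{l})\le w/N(\mathfrak{p})\}$ plus the analogous double sum over pairs $\mathfrak{p}_1,\mathfrak{p}_2$; feeding in the ideal count $\#\{N(\mathfrak{l})\le T\}=c_FT+O(T/\log T)$ and the prime-ideal estimate \eqref{sum N(p)} turns the main terms into the Dickman integral defining $\rho(2\log w/\log y)$, the $1/\log$ saving in the prime ideal theorem keeping the accumulated error at $O(w/\log w)=O(w/\log y)$. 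This step is where the hypothesis $1\le u\le3/2$ really enters --- it confines the smoothness parameter $2\log w/\log y$ to $[0,3]$, so the inclusion--exclusion stays finite --- and it is the part I expect to carry the technical weight, mainly in checking uniformity in $w$ and tracking the error terms. (Alternatively one may quote an off-the-shelf estimate for smooth ideals in number fields.)

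To finish, I would substitute the estimate for $\psi$ into the M\"obius identity. The error terms contribute $O\!\big(\tfrac{y^{u}}{\log y}\sum_{\mathfrak{d}}N(\mathfrak{d})^{-2}\big)$ together with a term bounded crudely by a fixed power $y^{u-c}$ of $y$ via $\#\{N(\mathfrak{d})\le T\}\ll T$, hence $O(y^{u}/\log y)$ in total. In the main term, since $\rho$ is Lipschitz on $[0,\infty)$ (indeed $\rho'(v)=-\rho(v-1)/v$ is bounded for $v\ge1$ and $\rho\equiv1$ before that), one may replace $\rho\!\big(2u-\tfrac{4\log N(\mathfrak{d})}{\log y}\big)$ by $\rho(2u)$ with error $O\!\big(\tfrac{\log N(\mathfrak{d})}{\log y}\big)$, whose weighted sum is again $O(y^{u}/\log y)$ because $\sum_{\mathfrak{d}}N(\mathfrak{d})^{-2}\log N(\mathfrak{d})<\infty$. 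What remains is $c_Fy^{u}\rho(2u)\sum_{\substack{N(\mathfrak{d})^2\le y^{u},\,P(\mathfrak{d})\le y^{1/2}}}\mu(\mathfrak{d})N(\mathfrak{d})^{-2}$, and this inner sum differs from the Euler product $\sum_{\mathfrak{d}\in\mathcal I}\mu(\mathfrak{d})N(\mathfrak{d})^{-2}=\zeta_F(2)^{-1}$ by $O(y^{-c'})$ for some $c'>0$, the omitted ideals $\mathfrak{d}$ having either $N(\mathfrak{d})>y^{u/2}$ or a prime factor of norm $>y^{1/2}$. Assembling the pieces gives
\[
\psi^{\#}(y^{u},y^{1/2})=\frac{c_F}{\zeta_F(2)}\,\rho(2u)\,y^{u}+O\!\Big(\frac{y^{u}}{\log y}\Big),
\]
which is the assertion.
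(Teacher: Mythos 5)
Your proposal takes essentially the same route as the paper: both start from the M\"obius identity $\psi^{\#}(y^u,y^{1/2})=\sum_{N(\mathfrak{t})^2\le y^u,\,P(\mathfrak{t})\le y^{1/2}}\mu(\mathfrak{t})\,\psi(y^u/N(\mathfrak{t})^2,y^{1/2})$, substitute the smooth-ideal asymptotic $\psi(w,y^{1/2})=c_F\rho(2\log w/\log y)\,w+O(w/\log y)$, and then carry out the tail estimates and the Lipschitz replacement of $\rho(2u-4\log N(\mathfrak{t})/\log y)$ by $\rho(2u)$ to reach the stated formula. The only genuine divergence is that the paper simply quotes \cite[Lemma 4.1]{moore} for the smooth-ideal count and defers the remaining bookkeeping to \cite[pp.\ 190--191]{ivic}, whereas you sketch a proof of the Moree estimate from scratch. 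That sketch is the weakest part: the ``two-term inclusion--exclusion'' over large primes is not quite a set-theoretic inclusion--exclusion (ideals may carry a repeated large prime factor, e.g.\ $\mathfrak{p}^2\mid\mathfrak{m}$ with $N(\mathfrak{p})>y^{1/2}$, since $N(\mathfrak{m})$ can reach $y^{3/2}$), and turning the resulting main terms into the Dickman function really requires iterating a Buchstab-type identity or quoting de Bruijn's argument rather than the naive expansion you describe. You do hedge with ``alternatively one may quote an off-the-shelf estimate,'' which is exactly what the paper does, so the overall plan is sound; but as written, the inclusion--exclusion step would need to be replaced by a citation or a more careful Buchstab recursion to be fully rigorous. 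The remainder of your write-up --- error terms $O(y^u/\log y)$ from the sum $\sum_{\mathfrak{d}}N(\mathfrak{d})^{-2}$, the Lipschitz bound on $\rho$, and the completion of $\sum\mu(\mathfrak{d})N(\mathfrak{d})^{-2}$ to $1/\zeta_F(2)$ with a power-saving tail --- matches the Ivi\'c computation to which the paper appeals.
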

\begin{proof}
From \cite[Lemma 4.1]{moore}, for $0 <\delta<1$ one has 
\begin{equation} \label{first term}
\psi(x,x^{\delta}) = c_{F} \cdot \rho(\frac{1}{\delta}) x + O(\frac{x}{\log x}).
\end{equation}
Now arguing as in the previous proposition we get
\begin{align*}
\psi(y^{u},y^{\frac{1}{2}})^{\#}= \underset{N(\mathfrak{l}) \leq y^u, P(\mathfrak{l}) \leq y^{\frac{1}{2}}}{\sum} \mu^{2}(\mathfrak{l})= \underset{N(\mathfrak{t}^2) \leq y^u, P(\mathfrak{t}) \leq y^{\frac{1}{2}}}{\sum} \mu(\mathfrak{t}) \psi(\frac{y^u}{N(\mathfrak{t}^2)},y^{\frac{1}{2}}).
\end{align*}
Now we put the estimate of \eqref{first term} in the right hand side of the last equation and the rest of the proof would follow exactly same as in \cite[pp 190-191]{ivic} where he has proved the same result in the case of integers (i.e. $\mathcal{O}_{F}= \mathbb{Z}$). From here \cite{ivic} used certain properties of the Dickman function $\rho$ and $\mu$ function, which are same as in this case (includng the $\mu$ function).  
\end{proof}

It is clear from \lemref{sum} that whenever $\rho(2u)- 2 \log u > 0$ and $y$ is large enough, one has $\sum_{N(\mathfrak{m}) \leq y^u } h_y(\mathfrak{m}) > 0$. It is known from \cite{sound} that $\rho(2u)- 2 \log u > 0$ for all $u< \kappa$, where $\kappa$ is the solution of the equation $\rho(2u)= 2 \log u$ and $\kappa> \frac{10}{9}$.   
\begin{lem} \label{th}
Let $f$ be a primitive form of weight $(k_1,k_2,...k_n) \in (2 \mathbb{Z})^n$. Then for any fixed $u$ with $1 \leq u < \kappa$, we have 
\[ T(f,y^{u}) \geq  \sum_{N(\mathfrak{m}) \leq y^{u}} h_{y}(\mathfrak{m}) \gg_{u} y^u. \]
\end{lem}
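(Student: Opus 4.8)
The plan is to produce the lower bound $T(f,y^u) \geq \sum_{N(\mathfrak{m}) \leq y^u} h_y(\mathfrak{m})$ by a term-by-term comparison of the two sums, exploiting the sign information we already have on the normalized coefficients $\widetilde{C}(\mathfrak{m})$ for small ideals together with the Ramanujan--Petersson bound $|\widetilde{C}(\mathfrak{p})| \leq 2$. First I would recall the standing assumption that $C(\mathfrak{m}) \geq 0$, hence $\widetilde{C}(\mathfrak{m}) \geq 0$, for all $\mathfrak{m}$ with $N(\mathfrak{m}) \leq y$; and that by the Hecke relation \eqref{C(p)} every unramified prime $\mathfrak{p}$ with $N(\mathfrak{p}) \leq y^{1/2}$ satisfies $\widetilde{C}(\mathfrak{p}) \geq 1$. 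The key observation is that $h_y$ was manufactured precisely so that $\widetilde{C}(\mathfrak{m}) \geq h_y(\mathfrak{m})$ holds for \emph{every} ideal $\mathfrak{m}$ with $N(\mathfrak{m}) \leq y^u$, at least once $u < \kappa < 3/2$ so that $y^{1/2}$, $y$, $y^u$ are correctly nested. I would verify this pointwise inequality by splitting on the largest prime factor: if $P(\mathfrak{m}) \leq y^{1/2}$ then $h_y(\mathfrak{m}) \in \{0,1\}$ (it is $1$ exactly when $\mathfrak{m}$ is squarefree and supported on primes of norm $\leq y^{1/2}$, in which case $\widetilde{C}(\mathfrak{m}) = \prod_{\mathfrak{p}\mid\mathfrak{m}} \widetilde{C}(\mathfrak{p}) \geq 1$ by multiplicativity and the Hecke relation, and it is $0$ otherwise, while $\widetilde{C}(\mathfrak{m}) \geq 0$ since $N(\mathfrak{m}) \leq y$); if $y^{1/2} < P(\mathfrak{m}) \leq y$ then $h_y(\mathfrak{m}) = 0 \leq \widetilde{C}(\mathfrak{m})$, again because $N(\mathfrak{m}) \leq y^u \leq y^{3/2}$ forces, after removing the single large prime to power one (larger powers make $h_y$ vanish too), the cofactor to have norm $\leq y$ hence nonnegative coefficient, and the large prime itself contributes a nonnegative factor when $N(\mathfrak{m}) \leq y$, or when $N(\mathfrak{m}) > y$ the term $h_y(\mathfrak{m})$ is already $0$; and if $P(\mathfrak{m}) > y$ then $h_y(\mathfrak{m}) \in \{0,-2\}$ and $\widetilde{C}(\mathfrak{m}) \geq -2$ whenever $h_y(\mathfrak{m}) = -2$ (which forces $\mathfrak{m} = \mathfrak{p}\cdot\mathfrak{l}$ with $\mathfrak{p}$ prime of norm $>y$ and $N(\mathfrak{l}) \leq y^{u-1} \leq y^{1/2}$, so $\widetilde{C}(\mathfrak{l}) \geq 1 > 0$ and $\widetilde{C}(\mathfrak{p}) \geq -2$ by Ramanujan, giving $\widetilde{C}(\mathfrak{m}) = \widetilde{C}(\mathfrak{p})\widetilde{C}(\mathfrak{l}) \geq -2$). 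Summing the inequality $\widetilde{C}(\mathfrak{m}) \geq h_y(\mathfrak{m})$ over $N(\mathfrak{m}) \leq y^u$ gives the first inequality $T(f,y^u) \geq \sum_{N(\mathfrak{m}) \leq y^u} h_y(\mathfrak{m})$.

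For the second inequality $\sum_{N(\mathfrak{m}) \leq y^u} h_y(\mathfrak{m}) \gg_u y^u$, I would simply invoke Lemma \ref{sum}: for $1 \leq u < \kappa$ we have $\rho(2u) - 2\log u > 0$ (this is the defining property of $\kappa$, recalled just before the statement), so the main term $\frac{c_F}{\zeta_F(2)} y^u (\rho(2u) - 2\log u)$ is a positive constant multiple of $y^u$, and the factor $\{1 + O(1/\log y)\}$ is $\geq 1/2$ once $y$ exceeds an absolute constant. Since $u$ is fixed and $u < \kappa < 3/2$, Lemma \ref{sum} applies in the stated range $1 \leq u \leq 3/2$, and the implied constant in $\gg_u$ absorbs $\frac{c_F}{\zeta_F(2)}(\rho(2u) - 2\log u)/2 > 0$.

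The main obstacle — and the only place requiring genuine care rather than bookkeeping — is the pointwise comparison $\widetilde{C}(\mathfrak{m}) \geq h_y(\mathfrak{m})$ in the middle range and the $P(\mathfrak{m}) > y$ range, where one must check that the combinatorial constraint $N(\mathfrak{m}) \leq y^u$ with $u < 3/2$ genuinely forces the cofactor (after stripping the one large prime) to have norm at most $y$, so that its coefficient is nonnegative by hypothesis; this is exactly why the lemma is restricted to $u < \kappa$ and ultimately $u < 3/2$, and why $h_y$ is supported on squarefree ideals with the specific three-zone weights. Everything else is the direct application of Lemma \ref{sum} and the Ramanujan bound $|\widetilde{C}(\mathfrak{p})| \leq 2$ valid in the even-weight case.
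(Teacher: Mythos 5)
Your strategy of proving the pointwise inequality $\widetilde{C}(\mathfrak{m}) \geq h_y(\mathfrak{m})$ for every $\mathfrak{m}$ with $N(\mathfrak{m}) \leq y^u$ and then summing does not work: the pointwise inequality is simply false. The clearest failure is in your last case. When $\mathfrak{m} = \mathfrak{p}\,\mathfrak{l}$ is squarefree with $N(\mathfrak{p}) > y$ and $N(\mathfrak{l}) \leq y^{u-1} \leq y^{1/2}$, you have $h_y(\mathfrak{m}) = -2$ and you argue $\widetilde{C}(\mathfrak{m}) = \widetilde{C}(\mathfrak{p})\widetilde{C}(\mathfrak{l}) \geq -2$ from $\widetilde{C}(\mathfrak{p}) \geq -2$ and $\widetilde{C}(\mathfrak{l}) \geq 1$. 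But multiplying a lower bound for a possibly negative factor by a lower bound for a positive factor does not give a lower bound for the product; one would need an \emph{upper} bound on $\widetilde{C}(\mathfrak{l})$, and in fact $\widetilde{C}(\mathfrak{l})$ can be as large as $2^{\omega(\mathfrak{l})}$. Concretely, if $\widetilde{C}(\mathfrak{p}) = -2$ and $\mathfrak{l} = \mathfrak{q}_1\mathfrak{q}_2$ with $\widetilde{C}(\mathfrak{q}_1) = \widetilde{C}(\mathfrak{q}_2) = 2$, then $\widetilde{C}(\mathfrak{m}) = -8 < -2 = h_y(\mathfrak{m})$. A second, independent gap: in the first two cases you repeatedly assert $\widetilde{C}(\mathfrak{m}) \geq 0$ "since $N(\mathfrak{m}) \leq y$", but the ideals being summed range up to norm $y^u > y$, and there non-squarefree ideals can have negative coefficients; e.g.\ $\mathfrak{m} = \mathfrak{p}^2$ with $y^{1/2} < N(\mathfrak{p}) \leq y^{u/2}$ has $h_y(\mathfrak{m}) = 0$ but $\widetilde{C}(\mathfrak{p}^2) = \widetilde{C}(\mathfrak{p})^2 - 1$ can be negative, and $\mathfrak{m} = \mathfrak{p}^3$ with $y^{1/3} < N(\mathfrak{p}) \leq y^{u/3}$ has $\widetilde{C}(\mathfrak{p}^3) = \widetilde{C}(\mathfrak{p})(\widetilde{C}(\mathfrak{p})^2 - 2) < 0$ whenever $1 \leq \widetilde{C}(\mathfrak{p}) < \sqrt{2}$.

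The paper avoids both problems by not comparing termwise at all. Instead it introduces the multiplicative $g_y$ with $\widetilde{C} = g_y \ast h_y$, observes that $g_y(\mathfrak{p}) = \widetilde{C}(\mathfrak{p}) - h_y(\mathfrak{p}) \geq 0$ at every prime (this is exactly where the Hecke relation and the Ramanujan bound enter), so $g_y \geq 0$ on squarefree ideals, and then writes the restricted sum as
\[ \underset{N(\mathfrak{m}) \leq y^{u}}{\sideset{}{^\#}\sum} \widetilde{C}(\mathfrak{m}) = \underset{N(\mathfrak{d}) \leq y^u}{\sideset{}{^\#}\sum} g_y(\mathfrak{d}) \sum_{N(\mathfrak{l}) \leq y^u/N(\mathfrak{d})} h_y(\mathfrak{l}). \]
Since every inner partial sum $\sum_{N(\mathfrak{l}) \leq z} h_y(\mathfrak{l})$ is nonnegative for $z \leq y^u$ (trivially for $z \leq y$, and by \lemref{sum} and $u < \kappa$ for $y < z \leq y^u$), every term in the outer sum is nonnegative, and the term $\mathfrak{d} = \mathcal{O}_F$ alone gives the desired lower bound $\sum_{N(\mathfrak{l}) \leq y^u} h_y(\mathfrak{l})$. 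The convolution thus exploits positivity of the \emph{partial sums} of $h_y$ rather than any pointwise domination, which is precisely what your argument lacks and cannot supply. Your treatment of the second inequality via \lemref{sum} is fine, but the first inequality needs to be reproved along the lines above.
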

\begin{proof}
Let $g_{y}: \mathcal{I} \rightarrow \mathbb{R}$ be the multiplicative function defined by the Dirichlet convolution identity
\[ \widetilde{C}(\mathfrak{m})= (g_{y} \ast h_{y}) (\mathfrak{m}). \]
Hence for any prime ideal $\mathfrak{p}$, from the definition of $h_y$ we have
\[ g_{y}(\mathfrak{p})= \widetilde{C}(\mathfrak{p})- h_{y}(\mathfrak{p})\geq 0. \]
By multiplicativity $g_{y}(\mathfrak{m}) \geq 0$ for any square-free integral ideal $\mathfrak{m}$. Hence for $u < \kappa$ (so that by the discussion after \lemref{sum}, one has $\sum_{N(\mathfrak{m})\leq y^{u}} h_{y}(\mathfrak{m}) > 0$), we have
\[T(f,y^{u}) \geq \underset{N(\mathfrak{m}) \leq y^{u}}{\sideset{}{^\#} \sum} \widetilde{C}(\mathfrak{m})= \underset{N(\mathfrak{d})\leq y^{u}}{\sideset{}{^\#}\sum} g_{y}(\mathfrak{d}) \sum_{N(\mathfrak{l})\leq \frac{y^u}{N(\mathfrak{d})}} h_{y}(\mathfrak{l}) \geq \sum_{N(\mathfrak{l})\leq y^{u}}h_{y}(\mathfrak{l}). \]
The last inequality follows from the fact that every term on the left hand side of the inequality is positive and $g_y (\mathcal{O}_{F})= 1.$ 
\end{proof}

\begin{lem} \label{sh}
Let $f$ be a primitive form of weight $(k_1,k_2,...k_n) \in (2 \mathbb{Z})^n$. Then for any fixed $u$, with $1 \leq u < \kappa$, one has
\[S(f,y^u) \gg_{u} y^u .\]
\end{lem}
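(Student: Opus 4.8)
The plan is to pass from the lower bound for $T$ furnished by \lemref{th} to one for $S$ by partial summation. Since $\log(x/N(\mathfrak{m})) = \int_{N(\mathfrak{m})}^{x} dt/t$ for every ideal with $N(\mathfrak{m}) \le x$, an elementary interchange of the finite sum and the integral yields the exact identity
\[ S(f,x) = \int_{1}^{x} T(f,t)\,\frac{dt}{t}. \]
Taking $x = y^{u}$ I would split this at $t = y$,
\[ S(f,y^{u}) = \int_{1}^{y} T(f,t)\,\frac{dt}{t} + \int_{y}^{y^{u}} T(f,t)\,\frac{dt}{t}, \]
and bound the two pieces separately; the second one will carry the main term.

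For $y \le t \le y^{u}$ I would write $t = y^{v}$ with $1 \le v \le u < \kappa$ (recall $\kappa < 3/2$, so that \lemref{sum} applies on the whole range of $v$). By \lemref{th}, $T(f,t) = T(f,y^{v}) \ge \sum_{N(\mathfrak{m})\le y^{v}} h_{y}(\mathfrak{m})$, and \lemref{sum} evaluates the latter as $\frac{c_F}{\zeta_F(2)}\, t\,(\rho(2v) - 2\log v)\{1 + O(1/\log y)\}$, with the implied constant uniform for $v \in [1,3/2]$. The map $v \mapsto \rho(2v) - 2\log v$ is decreasing, so on $[1,u]$ it is bounded below by $\rho(2u) - 2\log u$, which is strictly positive because $u$ lies below the first zero $\kappa$. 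Hence, as soon as $y$ exceeds a bound depending only on $F$, one has $T(f,t) \ge c_{u}\,t$ for all $t \in [y,y^{u}]$, where $c_{u} := \frac{c_F}{2\zeta_F(2)}(\rho(2u) - 2\log u) > 0$, and therefore
\[ \int_{y}^{y^{u}} T(f,t)\,\frac{dt}{t} \ge c_{u}\,(y^{u} - y). \]

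On the remaining range $1 \le t \le y$, the defining hypothesis on $y$ makes every $\widetilde{C}(\mathfrak{m})$ with $N(\mathfrak{m}) \le t$ nonnegative, so $T(f,t) \ge 0$; combined with the previous display this already gives $S(f,y^{u}) \ge c_{u}(y^{u}-y) \gg_{u} y^{u}$ whenever $u > 1$ and $y$ is large in terms of $u$. (For the endpoint $u = 1$ the second integral collapses, and one instead observes that the convolution argument of \lemref{th} --- which is in fact cleaner for $t \le y$, since discarding the non-squarefree ideals then loses only nonnegative terms and the partial sums of $h_{y}$ become $\psi^{\#}(\cdot, y^{1/2}) \ge 0$ --- gives $T(f,t) \ge \psi^{\#}(t, y^{1/2})$, which is $\gg t$ uniformly on $[1,y]$ by \eqref{sumnorm} and the estimates for $\psi^{\#}$ established above; thus $\int_{1}^{y} T(f,t)\,dt/t \gg y = y^{u}$.) In all cases $1 \le u < \kappa$ this yields $S(f,y^{u}) \gg_{u} y^{u}$, the largeness imposed on $y$ being harmless, since a bounded $y$ renders the conclusion of \thmref{first sin chng} trivial.

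The only genuinely delicate point, I expect, is the uniformity exploited in the second paragraph: to integrate the pointwise lower bound for $T(f,t)$ over the entire interval $[y,y^{u}]$ one needs both that $\rho(2v) - 2\log v$ stays bounded away from $0$ there --- which is exactly what the restriction $u < \kappa$ guarantees, given the monotonicity of this quantity --- and that the error term in \lemref{sum} is uniform in $v$ on $[1,3/2]$. Everything else is routine bookkeeping.
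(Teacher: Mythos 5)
Your argument is correct and follows the paper's own route: both rewrite $S(f,y^u)=\int_1^{y^u}T(f,t)\,dt/t$ via partial (Abel) summation, use $T(f,t)\ge 0$ on $[1,y]$ (nonnegativity of the coefficients below $y$), and bound $T(f,t)$ on $[y,y^u]$ via \lemref{th}. You simply spell out the details the paper leaves implicit --- the uniformity of the lower bound $T(f,y^v)\gg y^v$ over $v\in[1,u]$ coming from \lemref{sum} and the monotonicity of $\rho(2v)-2\log v$, and the endpoint $u=1$ --- which is a reasonable expansion rather than a different proof.
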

\begin{proof}
We have,
\begin{equation*}
\begin{split}
S(f,y^u)
=& \underset{N(\mathfrak{m}) \leq y^u} \sum \widetilde{C}(\mathfrak{m}) \log y^u - \underset{N(\mathfrak{m}) \leq y^u} \sum \widetilde{C}(\mathfrak{m}) \log N(\mathfrak{m}) \\
=& T(f,y^u) \log y^u- \sum_{n \leq y^u} a(n) \log n, 
\end{split}
\end{equation*}
where $a(n):= \sum_{N(\mathfrak{m})=n} \widetilde{C}(\mathfrak{m})$. Now using  the Abel-summation formula one has,
\[T(f,y^u) \log y^u- \sum_{n \leq y^u} a(n) \log n = \int_{1}^{y^u} \frac{T(f,t)}{t} dt.\]
 Now using \lemref{th} and the fact that $T(f,t)\geq 0$ for $1 \leq t \leq y$, the lemma follows immediately.  
\end{proof}
\subsection{Proof of Theorem 1.1}
\begin{enumerate}
\item[{(i)}]
When $(k_1,k_2,...,k_n) \in (2 \mathbb{Z})^n $, putting $u=\frac{10}{9}$, i.e. $x= y^{\frac{10}{9}}$ and using the comparison of the upper and the lower bounds of $S(f,y^\frac{10}{9})$ from \eqref{ub} and \lemref{sh}  we have
\[ y \ll_{n, \epsilon} Q_{f}^{\frac{9}{20}+ \epsilon}. \]
\item[{(ii)}]
For any other weight, note that for $x \leq y$, one has
\[S(f,x)= \sum_{N(\mathfrak{m}) \leq x} \widetilde{C}(\mathfrak{m})\log(\frac{x}{N(\mathfrak{m})}) \gg \sum_{N(\mathfrak{m}) \leq \frac{x}{2}} \widetilde{C}(\mathfrak{m}).\]
We put $x=y$. Now for $N(\mathfrak{p}) \leq y^{1/2}$, one has $\widetilde{C}(\mathfrak{p}) \geq 1$. It implies that
\[\underset{N(\mathfrak{m}) \leq \frac{y}{2}} \sum \widetilde{C}(\mathfrak{m}) \gg \underset{N(\mathfrak{m}) \leq {\frac{y}{2}}}{\sideset{}{^\#}\sum} \widetilde{C}(\mathfrak{m})= \psi^{\#} (\frac{y}{2}, (\frac{y}{2})^{\frac{1}{2}}) \gg y,\]
where the last inequality follows from \eqref{original first term}. Now comparing the upper and the lower bounds of $S(f,y)$, we get 
\[ y \ll_{n, \epsilon} Q_{f}^{\frac{1}{2}+ \epsilon}. \]
\end{enumerate}
 
 \section{Proof of Theorem 1.2}
 The idea of the proof of \thmref{ratio} is based on the work of K. Matom\"aki and M. Radziwi\l{}\l{}, who proved the result in the case of elliptic newforms of full level and weight (see \cite{km}). The following lemma will be useful to prove \thmref{ratio}.
 
 \begin{lem} \label{3.1}
 	Let $K,L : \mathbb{R}^{+} \rightarrow \mathbb{R}^{+} $ be such that $K(x) \rightarrow 0$ and $L(x) \rightarrow \infty$ for $x \rightarrow \infty$. Let $g : \mathcal{I} \rightarrow \{-1,0,1\} $ be a multiplicative function such that for every $x \geq 2$ and $\mathfrak{p} \in \mathcal{P}$,
 	\[\sum_{N(\mathfrak{p}) \geq x,\, g(\mathfrak{p})=0} \frac{1}{N(\mathfrak{p})} \leq K(x) \quad \text{and} \quad \sum_{N(\mathfrak{p}) \leq x,\, g(\mathfrak{p})=-1} \frac{1}{N(\mathfrak{p})} \geq L(x). \]
 Then one has,
 	\begin{equation*}
 	\begin{split}
 	&|  \{\mathfrak{m}   \in \mathcal{I} | N(\mathfrak{m})\leq x : g(\mathfrak{m})=1 \} | \\ =& (1+ o(1)) | \{ \mathfrak{m} \in \mathcal{I} | N(\mathfrak{m})\leq x: g(\mathfrak{m})=-1 \}| \\ =& (\frac{1}{2}+o(1)) c_{F} x \prod_{\mathfrak{p} \in \mathcal{P}} (1-\frac{1}{N(\mathfrak{p})}) (1+\frac{| g(\mathfrak{p}) |}{N(\mathfrak{p})}+\frac{{| g(\mathfrak{p}^{2}) |}}{N(\mathfrak{p})^2}+ \cdots),
 	\end{split}
 	\end{equation*}
 where $c_{F}$ is residue of $\zeta_F$ at the pole $s=1$.
 \end{lem}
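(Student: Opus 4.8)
The plan is to follow the analogous argument of Matom\"aki--Radziwi\l{}\l{} for $\mathbb{Q}$, adapting it to the Dedekind zeta setting. Write $N_{1}(x):=|\{\mathfrak{m}\in\mathcal{I}: N(\mathfrak{m})\le x,\ g(\mathfrak{m})=1\}|$ and $N_{-1}(x)$ for the analogous count with $g(\mathfrak{m})=-1$. The idea is to encode these counts in a Dirichlet series: consider $D(s):=\sum_{\mathfrak{m}}\frac{g(\mathfrak{m})}{N(\mathfrak{m})^{s}}$ and $E(s):=\sum_{\mathfrak{m}}\frac{|g(\mathfrak{m})|}{N(\mathfrak{m})^{s}}$. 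The first thing to observe is that $E(s)=\prod_{\mathfrak{p}}(1+|g(\mathfrak{p})|N(\mathfrak{p})^{-s}+|g(\mathfrak{p}^{2})|N(\mathfrak{p})^{-2s}+\cdots)$, which because of the hypothesis $\sum_{N(\mathfrak{p})\ge x,\,g(\mathfrak{p})=0}1/N(\mathfrak{p})\le K(x)\to 0$ differs from $\zeta_{F}(s)$ by a factor that is holomorphic and nonzero in a neighborhood of $s=1$; hence $E(s)$ has a simple pole at $s=1$ with residue $c_{F}\prod_{\mathfrak{p}}(1-\tfrac1{N(\mathfrak{p})})(1+\tfrac{|g(\mathfrak{p})|}{N(\mathfrak{p})}+\cdots)$, call it $c_{g}$. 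A Tauberian theorem (Wiener--Ikehara, or Landau's theorem for Dirichlet series with nonnegative coefficients, exactly as in the Dedekind-zeta count used in the Proposition above via \cite[Theorem 11.1.5]{mur}) then yields $N_{1}(x)+N_{-1}(x)=(1+o(1))c_{g}x$.

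The heart of the matter is to show $N_{1}(x)-N_{-1}(x)=o(x)$, equivalently $\sum_{N(\mathfrak{m})\le x}g(\mathfrak{m})=o(x)$; combined with the previous paragraph this gives the claimed asymptotics. For this I would show that $D(s)=E(s)\cdot H(s)$ where $H(s)=\prod_{\mathfrak{p}}\frac{1+g(\mathfrak{p})N(\mathfrak{p})^{-s}+\cdots}{1+|g(\mathfrak{p})|N(\mathfrak{p})^{-s}+\cdots}$, and the condition $\sum_{N(\mathfrak{p})\le x,\,g(\mathfrak{p})=-1}1/N(\mathfrak{p})\ge L(x)\to\infty$ forces $H(1)=0$: indeed whenever $g(\mathfrak{p})=-1$ the corresponding local factor at $s=1$ is $\frac{1-1/N(\mathfrak{p})+\cdots}{1+1/N(\mathfrak{p})+\cdots}$, and the divergence of $\sum_{g(\mathfrak{p})=-1}1/N(\mathfrak{p})$ makes the product over such $\mathfrak{p}$ diverge to $0$. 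Thus $D(s)$ has at most a removable singularity — in fact no pole — at $s=1$, so $D(s)/s$ is regular there; applying a Tauberian/contour argument (or more robustly, the elementary argument of \cite{sound}-type based on comparing $\sum g(\mathfrak m)$ against $\sum |g(\mathfrak m)|$ through a Rankin--Selberg-style convolution with $h_{y}$-type weights) gives $\sum_{N(\mathfrak{m})\le x}g(\mathfrak{m})=o(N_{1}(x)+N_{-1}(x))=o(x)$. The two displayed quantities then both equal $(\tfrac12+o(1))c_{g}x$, which is exactly the assertion.

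Concretely I would proceed in the following order: (1) set up $D,E,H$ and their Euler products and verify absolute convergence for $\Re s>1$; (2) compare $E(s)$ with $\zeta_{F}(s)$ using the $K(x)\to 0$ hypothesis to locate the simple pole at $s=1$ and compute its residue $c_{g}$; (3) invoke the Dedekind-zeta Tauberian theorem (as already cited in the excerpt) to get $N_{1}(x)+N_{-1}(x)\sim c_{g}x$; (4) use the $L(x)\to\infty$ hypothesis to show $H(1)=0$, hence $D(s)$ is regular at $s=1$; (5) deduce $\sum_{N(\mathfrak m)\le x} g(\mathfrak m)=o(x)$ and combine with (3). The main obstacle is step (4)--(5): turning "$H(1)=0$" into the quantitative cancellation $\sum_{N(\mathfrak m)\le x}g(\mathfrak m)=o(x)$ requires either a zero-free region / effective Tauberian input for $D$, or — following Matom\"aki--Radziwi\l{}\l{} more faithfully — a direct sieve-theoretic estimate showing that the fraction of ideals $\mathfrak{m}\le x$ with an even number of prime factors $\mathfrak{p}$ having $g(\mathfrak{p})=-1$ tends to $\tfrac12$; one has to be careful that the error terms in the Dedekind-zeta prime-counting (which here are only $O(x/\log x)$ rather than power-saving, as the Remark in the excerpt notes) are still good enough to push the argument through, and this is where the bulk of the technical work lies.
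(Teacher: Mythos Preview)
Your high-level structure matches the paper's: both reduce the lemma to (a) $\sum_{N(\mathfrak{m})\le x}|g(\mathfrak{m})|\sim c_g x$ and (b) $\sum_{N(\mathfrak{m})\le x}g(\mathfrak{m})=o(x)$, then add and subtract. The difference is in how (a) and (b) are obtained. The paper does not touch the Dirichlet series $D,E$ at all; it simply quotes two mean-value theorems for multiplicative functions on arithmetic semigroups from Lucht--Reifenrath \cite{mean}: an analogue of Wirsing's theorem saying that for real multiplicative $h:\mathcal I\to[-1,1]$ the mean $M(h)$ always exists and vanishes whenever $\sum_{\mathfrak p}|1-h(\mathfrak p)|/N(\mathfrak p)$ diverges, and a companion result giving the Euler-product value of $M(h)$ when that series converges. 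Applying the first to $g$ (the $L$-hypothesis forces divergence) yields (b); applying the second to $|g|$ (the $K$-hypothesis forces convergence) yields (a). That is the entire argument.

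Your route via $D(s),E(s)$ has a real gap at step (5), which you yourself flag: regularity of $D(s)$ at $s=1$ does \emph{not} imply $\sum_{N(\mathfrak m)\le x}g(\mathfrak m)=o(x)$. Over $\mathbb{Z}$ the Liouville function is the standard warning: $\sum\lambda(n)n^{-s}=\zeta(2s)/\zeta(s)$ is regular at $s=1$, yet $\sum_{n\le x}\lambda(n)=o(x)$ is equivalent to the prime number theorem. Neither of your suggested patches is available from the hypotheses alone: there is no zero-free region for $D$ on $\Re s=1$, and the $h_y$-sieve of \cite{sound} addresses first sign change, not equidistribution of signs. What step (5) actually needs is precisely a Hal\'asz/Wirsing theorem for $\mathcal I$, which is exactly the content of the black box \cite{mean} the paper invokes. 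So the ``bulk of the technical work'' you anticipate is in fact the whole substance of the lemma, and the honest fix is to cite the semigroup mean-value theorems rather than try to recover them from the pole structure of $D$. (A smaller issue: Wiener--Ikehara in step (3) needs $E(s)-c_g/(s-1)$ to extend continuously to all of $\Re s=1$, not just to be regular at the single point $s=1$; the hypothesis $K(x)\to0$ does not obviously deliver this either, so step (3) also leans on more than you have written.)
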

 To prove \lemref{3.1} we need two other lemmas about the mean value of the function $g$, which is defined by
 \[ M(g):= \lim\limits_{x \rightarrow \infty} \frac{1}{N(x;\mathcal{O}_{F})} \sum_{N(\mathfrak{m})\leq x} g(\mathfrak{m}), \]
 where $N(x; \mathcal{O}_{F})$ is the cardinality of integral ideals with norm less than or equals to $x$. Those two lemmas (i.e. \lemref{3.2.}, \lemref{3.3.}) are immediate consequences of certain results proved in \cite{mean} on the topic of arithmetic semigroups (an example is the set of integral ideals $\mathcal{I}$ ). For the convenienence of reading let us rewrite those results with taking the semigroup to be $\mathcal{I}$ (see \cite[Corollary 4.4, Theorem 6.3]{mean}). These results would be used further.
 
 \begin{thm}\label{theorem 6.3}
 Let $h: \mathcal{I} \rightarrow \mathbb{R}$ be a multiplicative function bounded by $1$. Then the mean value $M(h)$ exists. In particular if the series 
 \[\sum_{\mathfrak{p} \in \mathcal{P}} \frac{|1-h(\mathfrak{p})|}{N(\mathfrak{p})}\]
 diverges, then $M(h)$ vanishes to zero.
 \end{thm}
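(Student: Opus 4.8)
The plan is to read \thmref{theorem 6.3} as the instance, for the arithmetic semigroup $\mathcal{I}$ of integral ideals of $F$ graded by the absolute norm $N(\cdot)$, of Wirsing's mean-value theorem for real-valued multiplicative functions bounded by $1$, and then to deduce it by specializing the general results of \cite{mean}. The only point that genuinely has to be verified is that $\mathcal{I}$ fits the structural hypotheses ("Axiom $A$" in Knopfmacher's terminology) under which that theory operates.

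First I would record the classical Landau--Weber estimate $N(x;\mathcal{O}_{F}) = c_{F}x + O(x^{1-1/n})$, which exhibits $\mathcal{I}$ as an arithmetic semigroup of polynomial type with exponent $1$ and residue $c_{F}$; equivalently, its zeta function is the Dedekind zeta function $\zeta_{F}(s)$, holomorphic and non-vanishing on the line $\Re s=1$ apart from a simple pole at $s=1$ with residue $c_{F}$. From this single analytic input one obtains the semigroup analogues of the prime ideal theorem and of Mertens' theorems --- in particular the estimate $\sum_{N(\mathfrak{p})\le x}1/N(\mathfrak{p})=\log\log x+B+o(1)$ already quoted above --- and these are exactly the ingredients the abstract machinery requires.

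Granting the axioms, the existence of $M(h)$ for every multiplicative $h$ with $|h|\le 1$ is the Halász-type statement \cite[Theorem 6.3]{mean}, and the vanishing criterion is its real-valued case, \cite[Corollary 4.4]{mean}; because $h$ is real there is no need to twist by $N(\mathfrak{p})^{-it}$, so the criterion takes the clean form displayed. The mechanism I would transcribe to $\mathcal{I}$ is Wirsing's inequality: after comparing $\sum_{N(\mathfrak{m})\le x}h(\mathfrak{m})$ with a non-negative multiplicative majorant built from $|h|$, one bounds $x^{-1}\sum_{N(\mathfrak{m})\le x}h(\mathfrak{m})$ by a constant multiple of
\[ \prod_{N(\mathfrak{p})\le x}\Bigl(1-\tfrac{1}{N(\mathfrak{p})}\Bigr)\Bigl(1+\tfrac{h(\mathfrak{p})}{N(\mathfrak{p})}+\tfrac{h(\mathfrak{p}^{2})}{N(\mathfrak{p})^{2}}+\cdots\Bigr), \]
which by the Mertens-type estimate is $\asymp\exp\bigl(-\sum_{N(\mathfrak{p})\le x}(1-h(\mathfrak{p}))/N(\mathfrak{p})\bigr)$. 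Divergence of $\sum_{\mathfrak{p}}|1-h(\mathfrak{p})|/N(\mathfrak{p})$ then forces this to tend to $0$, giving $M(h)=0$; convergence instead identifies $M(h)$ with the now-convergent Euler product. None of this departs from the rational case beyond replacing rational primes by prime ideals and the bound $n\le x$ by $N(\mathfrak{m})\le x$.

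The step I expect to need the most care is making sure the theorems of \cite{mean} are quoted in a form whose error terms remain uniform enough to feed, with the same $o(1)$ quality as in the classical setting, into \lemref{3.1} and the two mean-value lemmas derived from it; this is precisely why it is cleaner to invoke \cite[Corollary 4.4, Theorem 6.3]{mean} directly than to reprove them here.
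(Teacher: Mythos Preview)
Your proposal is correct and matches the paper's own treatment: the paper does not prove \thmref{theorem 6.3} at all but simply records that it is the specialization to the semigroup $\mathcal{I}$ of \cite[Theorem 6.3]{mean} (with \thmref{theorem 4.4} coming from \cite[Corollary 4.4]{mean}). Your additional remarks---verifying that $\mathcal{I}$ satisfies the relevant axioms via the Landau--Weber count and sketching the Wirsing-type mechanism behind the vanishing criterion---go beyond what the paper writes but are entirely in the same spirit and would only strengthen the exposition.
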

 
 \begin{thm}\label{theorem 4.4}
  Let $h: \mathcal{I} \rightarrow \mathbb{C}$ be a multiplicative function bounded by $1$. Further assume that
  \[\sum_{\mathfrak{p} \in \mathcal{P}} \frac{|1-h(\mathfrak{p})|}{N(\mathfrak{p})} < \infty.\]
  Then $M(h)$ exists and
  \[M(h)= \prod_{\mathfrak{p} \in \mathcal{P}} (1-\frac{1}{N(\mathfrak{p})}) (1+\frac{| h(\mathfrak{p}) |}{N(\mathfrak{p})}+\frac{{| h(\mathfrak{p}^{2}) |}}{N(\mathfrak{p})^2}+ \cdots).\]
 \end{thm}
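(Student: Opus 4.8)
The plan is to recognize the statement as the arithmetic–semigroup analogue of Delange's mean–value theorem for the multiplicative semigroup $(\mathcal{I},N)$ of integral ideals of $F$, and to prove it by the classical convolution argument. The only analytic input is the ideal–counting estimate $N(x;\mathcal{O}_F)=c_F x+O(x^{1-1/n})$ already invoked above via \cite[Theorem~11.1.5]{mur}; everything else is elementary, because the hypothesis $\sum_{\mathfrak{p}}|1-h(\mathfrak{p})|/N(\mathfrak{p})<\infty$ is the \emph{absolutely} convergent one (the deeper Delange/Halász situation, where one only assumes $\sum_{\mathfrak{p}}(1-\operatorname{Re}h(\mathfrak{p}))/N(\mathfrak{p})$ converges, is not needed here). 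Alternatively one may simply invoke \cite[Corollary~4.4]{mean} directly, whose hypotheses hold because $(\mathcal{I},N)$ satisfies Knopfmacher's Axiom~$A$ — again exactly the content of the displayed ideal–counting asymptotic.

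For the self-contained argument, set $g:=\mu\ast h$, so that $h=\mathbf{1}\ast g$ with $\mathbf{1}$ the constant function $1$. Reading this off at each Euler factor gives $g(\mathfrak{p})=h(\mathfrak{p})-1$ and $g(\mathfrak{p}^v)=h(\mathfrak{p}^v)-h(\mathfrak{p}^{v-1})$ for $v\ge1$, hence $|g(\mathfrak{p})|=|1-h(\mathfrak{p})|$ and $|g(\mathfrak{p}^v)|\le2$. Combining the hypothesis with $\sum_{\mathfrak{p}}\sum_{v\ge2}|g(\mathfrak{p}^v)|/N(\mathfrak{p})^v\le\sum_{\mathfrak{p}}2/\bigl(N(\mathfrak{p})(N(\mathfrak{p})-1)\bigr)<\infty$ shows $\sum_{\mathfrak{p}}\sum_{v\ge1}|g(\mathfrak{p}^v)|/N(\mathfrak{p})^v<\infty$, so $\sum_{\mathfrak{d}}|g(\mathfrak{d})|/N(\mathfrak{d})<\infty$ and $\sum_{\mathfrak{d}}g(\mathfrak{d})/N(\mathfrak{d})=\prod_{\mathfrak{p}}\bigl(1+g(\mathfrak{p})/N(\mathfrak{p})+g(\mathfrak{p}^2)/N(\mathfrak{p})^2+\cdots\bigr)$. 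Interchanging summation,
\[\sum_{N(\mathfrak{m})\le x}h(\mathfrak{m})=\sum_{N(\mathfrak{d})\le x}g(\mathfrak{d})\,N\!\Bigl(\tfrac{x}{N(\mathfrak{d})};\mathcal{O}_F\Bigr)=c_Fx\sum_{N(\mathfrak{d})\le x}\frac{g(\mathfrak{d})}{N(\mathfrak{d})}+O\Bigl(x^{1-1/n}\sum_{N(\mathfrak{d})\le x}\frac{|g(\mathfrak{d})|}{N(\mathfrak{d})}\,N(\mathfrak{d})^{1/n}\Bigr).\]
The main term equals $c_Fx\bigl(\sum_{\mathfrak{d}}g(\mathfrak{d})/N(\mathfrak{d})+o(1)\bigr)$ by absolute convergence; for the error I split the $\mathfrak{d}$-sum at a threshold $D=D(x)$ with $D\to\infty$ and $D/x\to0$, bounding $N(\mathfrak{d})^{1/n}\le D^{1/n}$ on $N(\mathfrak{d})\le D$ (contribution $\ll x^{1-1/n}D^{1/n}=o(x)$) and $N(\mathfrak{d})^{1/n}\le x^{1/n}$ on $N(\mathfrak{d})>D$ (contribution $\ll x\sum_{N(\mathfrak{d})>D}|g(\mathfrak{d})|/N(\mathfrak{d})=o(x)$, the tail of a convergent series). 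Dividing by $N(x;\mathcal{O}_F)\sim c_Fx$ then shows $M(h)$ exists and equals $\sum_{\mathfrak{d}}g(\mathfrak{d})/N(\mathfrak{d})$. Finally, evaluating the local identity $\sum_{v\ge0}g(\mathfrak{p}^v)t^v=(1-t)\sum_{v\ge0}h(\mathfrak{p}^v)t^v$ (which is just $h=\mathbf{1}\ast g$ read off at $\mathfrak{p}$) at $t=1/N(\mathfrak{p})$ converts the Euler product into $\prod_{\mathfrak{p}}(1-1/N(\mathfrak{p}))\bigl(1+h(\mathfrak{p})/N(\mathfrak{p})+h(\mathfrak{p}^2)/N(\mathfrak{p})^2+\cdots\bigr)$, which is the asserted value — in the only application $h$ is nonnegative, so the moduli in the statement are immaterial.

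The argument has no genuine obstacle; the one place demanding attention is the error term, where the weighted sum $\sum_{N(\mathfrak{d})\le x}|g(\mathfrak{d})|N(\mathfrak{d})^{1/n-1}$ must be shown to be $o(x^{1/n})$ uniformly enough to dominate the $O(x^{1-1/n})$ error coming from the ideal count — which is exactly why the single crude bound $\sum_{\mathfrak{d}}|g(\mathfrak{d})|/N(\mathfrak{d})\ll1$ is not enough and the splitting at $D(x)$ is forced. All of this is, of course, subsumed in \cite[Corollary~4.4]{mean}, which is how the statement is used in the sequel.
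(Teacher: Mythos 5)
Your proposal is correct, and it is strictly more informative than the paper's, which disposes of this theorem with a single citation to \cite[Corollary~4.4]{mean} and gives no proof. You observe that citation as one route, but the self-contained argument you supply — write $h=\mathbf{1}\ast g$ with $g=\mu\ast h$, note $|g(\mathfrak{p})|=|1-h(\mathfrak{p})|$ and $|g(\mathfrak{p}^v)|\le 2$, deduce absolute convergence of $\sum_{\mathfrak{d}}g(\mathfrak{d})/N(\mathfrak{d})$, then perform the hyperbola swap against the ideal count $N(y;\mathcal{O}_F)=c_Fy+O(y^{1-1/n})$ — is the classical Wintner/Delange mean-value argument transplanted to the semigroup $\mathcal{I}$, and every step is sound. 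Your treatment of the error term is the one genuinely delicate point and you handle it correctly: the crude bound $\sum_{\mathfrak{d}}|g(\mathfrak{d})|/N(\mathfrak{d})\ll 1$ alone would only give $O(x^{1-1/n}\cdot x^{1/n})=O(x)$ rather than $o(x)$, so the split at a threshold $D(x)$ with $D\to\infty$, $D/x\to 0$ is genuinely needed, and the two pieces $x^{1-1/n}D^{1/n}$ and $x\sum_{N(\mathfrak{d})>D}|g(\mathfrak{d})|/N(\mathfrak{d})$ are both $o(x)$ exactly as you say. You also rightly flag that the Euler product in the displayed statement contains $|h(\mathfrak{p}^v)|$ rather than $h(\mathfrak{p}^v)$, which cannot be correct for a genuinely complex-valued $h$ (the mean value can then be non-real, while the product with moduli is manifestly real and nonnegative); either this is a transcription slip or the statement is tacitly for nonnegative $h$, and it is immaterial in the sequel because the only invocation is \lemref{3.3.}, where the input function $l$ is $[0,1]$-valued. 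Relative to the paper's one-line appeal to the general arithmetic-semigroup machinery, your proof trades generality for an explicit, directly checkable argument that uses nothing beyond the ideal-counting asymptotic already in play.
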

\thmref{theorem 6.3} and \thmref{theorem 4.4} follows from \cite[Theorem 6.3]{mean}, \cite[Corollary 4.4]{mean} respectively.
 \begin{lem} \label{3.2.}
When $g$ is as given in \lemref{3.1}, $M(g)$ exists and is equal to zero.
 \end{lem}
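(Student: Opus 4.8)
The plan is to deduce \lemref{3.2.} directly from \thmref{theorem 6.3}. Since $g$ takes values in $\{-1,0,1\}$, it is in particular a multiplicative function bounded by $1$, so \thmref{theorem 6.3} already guarantees that the mean value $M(g)$ exists. It then remains only to check that $M(g)$ vanishes, for which, by the second assertion of \thmref{theorem 6.3}, it suffices to verify the divergence
\[\sum_{\mathfrak{p} \in \mathcal{P}} \frac{|1 - g(\mathfrak{p})|}{N(\mathfrak{p})} = \infty.\]

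First I would compute $|1 - g(\mathfrak{p})|$ according to the three possible values of $g(\mathfrak{p})$: it equals $2$ when $g(\mathfrak{p}) = -1$, equals $1$ when $g(\mathfrak{p}) = 0$, and equals $0$ when $g(\mathfrak{p}) = 1$. Discarding the nonnegative contribution coming from the primes with $g(\mathfrak{p}) = 0$, this gives
\[\sum_{\mathfrak{p} \in \mathcal{P}} \frac{|1 - g(\mathfrak{p})|}{N(\mathfrak{p})} \;\geq\; 2 \sum_{\substack{\mathfrak{p} \in \mathcal{P} \\ g(\mathfrak{p}) = -1}} \frac{1}{N(\mathfrak{p})}.\]
Next I would invoke the hypothesis of \lemref{3.1} concerning the primes where $g$ takes the value $-1$: for every $x \geq 2$ one has $\sum_{N(\mathfrak{p}) \leq x,\, g(\mathfrak{p}) = -1} \tfrac{1}{N(\mathfrak{p})} \geq L(x)$, and $L(x) \to \infty$ as $x \to \infty$. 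Letting $x \to \infty$ shows that $\sum_{g(\mathfrak{p}) = -1} \tfrac{1}{N(\mathfrak{p})}$ diverges, hence so does the series displayed above. By \thmref{theorem 6.3} this forces $M(g) = 0$, which completes the argument.

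There is essentially no serious obstacle here: the only point requiring a line of justification is that $g$ meets the hypotheses of \thmref{theorem 6.3}, namely multiplicativity together with the bound $|g(\mathfrak{m})| \leq 1$ for all $\mathfrak{m}$, both of which are immediate from the statement of \lemref{3.1}. Note also that the other hypothesis of \lemref{3.1} — the one controlling, via $K(x)$, the primes at which $g$ vanishes — plays no role in this lemma; it will enter only later, when the exact density prefactor is identified through \thmref{theorem 4.4}.
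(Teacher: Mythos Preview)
Your proof is correct and follows essentially the same route as the paper: both apply \thmref{theorem 6.3} after bounding $\sum_{\mathfrak{p}} |1-g(\mathfrak{p})|/N(\mathfrak{p})$ from below by $2\sum_{g(\mathfrak{p})=-1} 1/N(\mathfrak{p})$ and invoking the divergence hypothesis $L(x)\to\infty$. Your write-up is in fact slightly more explicit than the paper's (noting that $|g|\le 1$ and that the $K(x)$ hypothesis is not needed here), but the argument is the same.
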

 \begin{proof}
From the hypothesis of \lemref{3.1}, we note that $\sum_{\mathfrak{p} \in \mathcal{P}} \frac{1-g(\mathfrak{p})}{N(\mathfrak{p})} $ diverges. It follows from the fact that
\[\sum_{\mathfrak{p} \in \mathcal{P}} \frac{1-g(\mathfrak{p})}{N(\mathfrak{p})} \geq \sum_{\mathfrak{p} \in \mathcal{P},g(\mathfrak{p})=-1} \frac{2}{N(\mathfrak{p})}. \]
 Now the lemma immediately follows directly from \thmref{theorem 6.3}.
 \end{proof}
 The next lemma concerns about the mean value of a non-negative multiplicative function on $\mathcal{I}$. It is an obvious consequence of \thmref{theorem 4.4}. We omit the proof.
 \begin{lem} \label{3.3.}
 	Let $l :\mathcal{I} \rightarrow [0,1]$ be a multiplicative function such that $\sum_{\mathfrak{p} \in \mathcal{P}} \frac{1-l(\mathfrak{p})}{N(\mathfrak{p})}$ converges. Then $M(l)= \prod_{\mathfrak{p} \in \mathcal{P}} (1-\frac{1}{N(\mathfrak{p})}) (1+\frac{| l(\mathfrak{p}) |}{N(\mathfrak{p})}+\frac{{| l(\mathfrak{p}^{2}) |}}{N(\mathfrak{p})^2}+ \cdots)$ exists.
 \end{lem}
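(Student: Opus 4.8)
The plan is to deduce the lemma immediately from \thmref{theorem 4.4}. First I would note that, since $l$ takes values in the interval $[0,1]$, it is in particular a multiplicative function bounded by $1$, so \thmref{theorem 4.4} is applicable to $h = l$ as soon as its convergence hypothesis is verified. But for every prime ideal $\mathfrak{p}$ we have $0 \le l(\mathfrak{p}) \le 1$, hence $|1 - l(\mathfrak{p})| = 1 - l(\mathfrak{p})$, and therefore
\[ \sum_{\mathfrak{p} \in \mathcal{P}} \frac{|1-l(\mathfrak{p})|}{N(\mathfrak{p})} = \sum_{\mathfrak{p} \in \mathcal{P}} \frac{1-l(\mathfrak{p})}{N(\mathfrak{p})} < \infty \]
by the assumption of the lemma. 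Thus the hypothesis of \thmref{theorem 4.4} holds for $h=l$.

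Next, applying \thmref{theorem 4.4} with $h = l$ gives that $M(l)$ exists and
\[ M(l) = \prod_{\mathfrak{p} \in \mathcal{P}} \Big(1-\frac{1}{N(\mathfrak{p})}\Big)\Big(1+\frac{| l(\mathfrak{p}) |}{N(\mathfrak{p})}+\frac{| l(\mathfrak{p}^{2}) |}{N(\mathfrak{p})^2}+ \cdots\Big), \]
which is precisely the asserted formula; since $l \ge 0$ the absolute value signs are redundant, but writing them keeps the statement in the exact shape of \thmref{theorem 4.4} (and of \lemref{3.1}, where this lemma will be used). I would also remark that the displayed infinite product converges absolutely under the stated hypothesis, so the right-hand side is a well-defined real number.

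There is essentially no obstacle in this argument: the entire analytic content sits in \thmref{theorem 4.4} (which in turn rests on \cite[Corollary 4.4]{mean}), and the only thing that needs to be said is the trivial identity $|1-l(\mathfrak{p})| = 1-l(\mathfrak{p})$ for $l(\mathfrak{p}) \in [0,1]$, which makes the convergence hypothesis of the lemma coincide verbatim with that of the theorem. This is exactly why the paper is content to omit the proof.
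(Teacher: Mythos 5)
Your proposal is correct and matches the paper's intent exactly: the paper omits the proof precisely because it is, as you show, an immediate application of \thmref{theorem 4.4} with $h=l$, the only observation needed being that $|1-l(\mathfrak{p})|=1-l(\mathfrak{p})$ for $l(\mathfrak{p})\in[0,1]$ so the convergence hypotheses coincide.
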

\noindent\textbf{Proof of \lemref{3.1}.} Recall that $N(x; \mathcal{O}_{F}) \sim c_{F}x$ (see \cite[Theorem 11.1.5]{mur}). From \lemref{3.2.} we get
 \begin{equation} \label{sumg}
  \sum_{N(\mathfrak{m}) \leq x} g(\mathfrak{m})= o(N(x;\mathcal{O}_{F}))= o(x).
\end{equation}
Note that from the hypothesis of \lemref{3.1}, $|g|$ satisfies the conditions of \lemref{3.3.}. It follows from the fact that
\[\sum_{\mathfrak{p} \in \mathcal{P}} \frac{1-|g(\mathfrak{p})|}{N(\mathfrak{p})} = \sum_{\mathfrak{p} \in \mathcal{P},g(\mathfrak{p})=0} \frac{1}{N(\mathfrak{p})}.\]
So putting $l(\mathfrak{m})= | g(\mathfrak{m}) |$ we get from \lemref{3.3.} that
 \begin{equation} \label{summodg}
 \sum_{N(\mathfrak{m}) \leq x} | g(\mathfrak{m}) | = (1+ o(1)) c_{F}x \prod_{\mathfrak{p} \in \mathcal{P}} (1-\frac{1}{N(\mathfrak{p})}) (1+\frac{| g(\mathfrak{p}) |}{N(\mathfrak{p})}+\frac{{| g(\mathfrak{p}^{2}) |}}{N(\mathfrak{p})^2}+ \cdots).
 \end{equation}
Now since $g$ takes only three values $\{-1,0,1\}$, one has
\[\sum_{N(\mathfrak{m}) \leq x} g(\mathfrak{m})= \underset{N(\mathfrak{m}) \leq x}{\sum_{\mathfrak{m} \in \mathcal{I}, g(\mathfrak{m})=1}} 1 - \underset{N(\mathfrak{m}) \leq x}{\sum_{\mathfrak{m} \in \mathcal{I}, g(\mathfrak{m})=-1}} 1 \]
and
\[ \sum_{N(\mathfrak{m}) \leq x} |g(\mathfrak{m})|= \underset{N(\mathfrak{m}) \leq x}{\sum_{\mathfrak{m} \in \mathcal{I}, g(\mathfrak{m})=1}} 1 + \underset{N(\mathfrak{m}) \leq x}{\sum_{\mathfrak{m} \in \mathcal{I}, g(\mathfrak{m})=-1}} 1.\]
 Hence the \lemref{3.1} follows from \eqref{sumg} and \eqref{summodg}. \qed
 
\lemref{3.1} leads us to the following result.
\begin{lem} \label{3.4}
	Let $K,L : \mathbb{R}^{+} \rightarrow \mathbb{R}^{+} $ be such that $K(x) \rightarrow 0$ and $L(x) \rightarrow \infty$ for $x \rightarrow \infty$. Let $j : \mathcal{I} \rightarrow \mathbb{R}$ be a multiplicative function such that for every $x \geq 2$ and $\mathfrak{p} \in \mathcal{P}$,
\[	\sum_{N(\mathfrak{p}) \geq x,\, j(\mathfrak{p})=0} \frac{1}{N(\mathfrak{p})} \leq K(x) \quad \text{and} \quad \sum_{N(\mathfrak{p}) \leq x,\, j(\mathfrak{p})<0} \frac{1}{N(\mathfrak{p})} \geq L(x). \]
Then one has
\begin{equation*}
\begin{split}
	&| \{\mathfrak{m} \in \mathcal{I} | N(\mathfrak{m})\leq x : j(\mathfrak{m}) > 0 \} | \\ =& (1+ o(1)) | \{ \mathfrak{m} \in \mathcal{I} | N(\mathfrak{m})\leq x: j(\mathfrak{m}) < 0 \} | \\ =& (\frac{1}{2}+o(1)) c_{F} x \prod_{\mathfrak{p} \in \mathcal{P}} (1-\frac{1}{N(\mathfrak{p})}) (1+\frac{ s(\mathfrak{p}) }{N(\mathfrak{p})}+\frac{{ s(\mathfrak{p}^{2}) }}{N(\mathfrak{p})^2}+ \cdots),
\end{split}
\end{equation*}
	where $s(\mathfrak{m})$ is $0$ or $1$ according as $j(\mathfrak{m})=0$ or $j(\mathfrak{m}) \neq 0$.		
\end{lem}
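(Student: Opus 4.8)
The plan is to deduce \lemref{3.4} from \lemref{3.1} by replacing $j$ with its sign. Define $g : \mathcal{I} \to \{-1,0,1\}$ by $g(\mathfrak{m}) := \mathrm{sgn}(j(\mathfrak{m}))$, i.e. $g(\mathfrak{m})$ equals $1$, $-1$ or $0$ according as $j(\mathfrak{m})$ is positive, negative or zero. First I would check that $g$ is multiplicative: if $\mathfrak{m}, \mathfrak{n} \in \mathcal{I}$ are coprime then $j(\mathfrak{m}\mathfrak{n}) = j(\mathfrak{m}) j(\mathfrak{n})$, and since $\mathrm{sgn}$ is multiplicative on $\mathbb{R}$ we get $g(\mathfrak{m}\mathfrak{n}) = g(\mathfrak{m}) g(\mathfrak{n})$; moreover $g(\mathcal{O}_F) = \mathrm{sgn}(j(\mathcal{O}_F)) = \mathrm{sgn}(1) = 1$, so $g$ is an admissible input for \lemref{3.1}.

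Next I would observe that for a prime ideal $\mathfrak{p}$ one has $g(\mathfrak{p}) = 0 \iff j(\mathfrak{p}) = 0$ and $g(\mathfrak{p}) = -1 \iff j(\mathfrak{p}) < 0$; hence the two hypotheses on $K$ and $L$ in \lemref{3.4} are \emph{verbatim} the hypotheses of \lemref{3.1} applied to $g$. Also, since $g$ takes values in $\{-1,0,1\}$, we have $|g(\mathfrak{p}^v)| = s(\mathfrak{p}^v)$ for every $v \geq 0$, where $s$ is the function in the statement (note $s = |g|$ is itself multiplicative, being the absolute value of a multiplicative function). Finally the relevant counting sets coincide: $\{\mathfrak{m} \in \mathcal{I} \mid N(\mathfrak{m}) \leq x : j(\mathfrak{m}) > 0\} = \{\mathfrak{m} \in \mathcal{I} \mid N(\mathfrak{m}) \leq x : g(\mathfrak{m}) = 1\}$, and likewise with $j(\mathfrak{m}) < 0$ replaced by $g(\mathfrak{m}) = -1$.

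Applying \lemref{3.1} to $g$ and substituting these identifications then yields precisely the claimed asymptotics, with the Euler product rewritten in terms of $s(\mathfrak{p}), s(\mathfrak{p}^2), \dots$ rather than $|g(\mathfrak{p})|, |g(\mathfrak{p}^2)|, \dots$. Since each step is a direct translation, there is essentially no obstacle; the only point deserving a line of justification is that the sign of a (nonzero) multiplicative real-valued function is again multiplicative and equals $1$ at $\mathcal{O}_F$, which is exactly what licenses the use of \lemref{3.1}.
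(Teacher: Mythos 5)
Your proof is correct and coincides with the paper's: the paper also derives \lemref{3.4} by applying \lemref{3.1} to the function equal to $0$ when $j$ vanishes and to $j(\mathfrak{m})/|j(\mathfrak{m})|$ otherwise, which is exactly your $g = \mathrm{sgn}(j)$. You merely spell out the routine verification (multiplicativity of $g$, the matching of hypotheses and counting sets) that the paper compresses into the word "obvious."
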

\begin{proof}
This is obvious by applying \lemref{3.1} to the multiplicative function which takes value $0$ when $j$ is $0$ and takes value $\frac{j(\mathfrak{m})}{| j(\mathfrak{m}) |}$ otherwise.	
\end{proof}
 We would need the following lemma to apply \lemref{3.4} to prove \thmref{ratio}.
 \begin{lem} \label{3.5}
Let $f$ be a primitive adelic Hilbert cusp form of weight $(k_1, k_2,...,k_n) \in (2 \mathbb{Z})^{n}$ and full level. Let ${C(\mathfrak{m})}$ denote its Fourier coefficients. Then one has
\[\sum_{N(\mathfrak{p})\geq x, C(\mathfrak{p})=0} \frac{1}{N(\mathfrak{p})} = o(1)\quad \text{and} \quad \sum_{C(\mathfrak{p}) <0} \frac{1}{N(\mathfrak{p})}\]
diverges.
 \end{lem}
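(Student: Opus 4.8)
The plan is to deduce both halves of the statement from the Sato--Tate equidistribution theorem of \cite{sato}, which is available here because all the $k_j$ are even. First I would carry out the routine reductions. By the Ramanujan--Petersson bound $|\widetilde{C}(\mathfrak{p})|\le 2$ (again valid for even weight, see \cite{bla}) we may write, for each unramified $\mathfrak{p}$, $\widetilde{C}(\mathfrak{p})=2\cos\theta_{\mathfrak{p}}$ with $\theta_{\mathfrak{p}}\in[0,\pi]$; since $C(\mathfrak{p})=N(\mathfrak{p})^{(k_0-1)/2}\widetilde{C}(\mathfrak{p})$ with a strictly positive normalizing factor, the conditions $C(\mathfrak{p})=0$ and $C(\mathfrak{p})<0$ are the same as $\theta_{\mathfrak{p}}=\pi/2$ and $\theta_{\mathfrak{p}}\in(\pi/2,\pi]$ respectively, and the finitely many ramified primes contribute $O(1)$ to either series and may be discarded. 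The Sato--Tate theorem then says that, ordering the unramified primes by norm and writing $\pi_F(x)=|\{\mathfrak{p}:N(\mathfrak{p})\le x\}|\sim x/\log x$ (the prime ideal theorem), one has $\pi_F(x)^{-1}\sum_{N(\mathfrak{p})\le x}\phi(\theta_{\mathfrak{p}})\to\int_0^\pi\phi\,d\mu$ for every continuous $\phi\colon[0,\pi]\to\mathbb{R}$, where $d\mu=\tfrac{2}{\pi}\sin^2\theta\,d\theta$.

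For the divergence of $\sum_{C(\mathfrak{p})<0}1/N(\mathfrak{p})$ I would fix a continuous $\phi\colon[0,\pi]\to[0,1]$ supported in $[3\pi/4,\pi]$ and equal to $1$ on $[7\pi/8,\pi]$, so that $\phi\le\mathbf{1}_{(\pi/2,\pi]}$ and $c:=\int_0^\pi\phi\,d\mu>0$. Then for all large $t$ Sato--Tate gives
\[
A(t):=|\{\mathfrak{p}:N(\mathfrak{p})\le t,\,C(\mathfrak{p})<0\}|\;\ge\;\sum_{N(\mathfrak{p})\le t}\phi(\theta_{\mathfrak{p}})\;\ge\;\frac{c}{2}\,\pi_F(t)\;\gg\;\frac{t}{\log t},
\]
and partial summation converts this into $\sum_{N(\mathfrak{p})\le x,\,C(\mathfrak{p})<0}1/N(\mathfrak{p})\gg\int^{x}A(t)\,t^{-2}\,dt\gg\int^{x}(t\log t)^{-1}\,dt=\log\log x+O(1)\to\infty$, which is the second assertion.

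For the first assertion one must be more careful: qualitative Sato--Tate gives only that $\{\mathfrak{p}:C(\mathfrak{p})=0\}=\{\mathfrak{p}:\theta_{\mathfrak{p}}=\pi/2\}$ has natural density zero in $\mathcal{P}$ (comparing with the intervals $|\theta-\pi/2|<\delta$ and letting $\delta\downarrow 0$), and density zero does \emph{not} force $\sum_{C(\mathfrak{p})=0}1/N(\mathfrak{p})$ to converge. I would therefore appeal to a quantitative sparsity estimate of the shape
\[
|\{\mathfrak{p}:N(\mathfrak{p})\le x,\,C(\mathfrak{p})=0\}|\;\ll_{f}\;\frac{x}{(\log x)^{1+\delta}}\qquad(\text{for some }\delta>0),
\]
available since $f$ is non-CM (which is precisely the setting in which the Sato--Tate theorem of \cite{sato} applies). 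Such a bound follows, for example, from an effective Sato--Tate theorem (test against a smooth bump of slowly shrinking width at $\pi/2$), or from a Serre-type argument with the $\lambda$-adic Galois representations $\rho_{f,\lambda}$ of $f$: the relation $C(\mathfrak{p})=0$ forces $\mathrm{tr}\,\rho_{f,\lambda}(\mathrm{Frob}_{\mathfrak{p}})=0$, confining $\mathrm{Frob}_{\mathfrak{p}}$ to a proper, hence thin, subvariety of the (large) image of $\rho_{f,\lambda}$, and effective Chebotarev over a family of finite quotients---or the large sieve---then yields the estimate; one could also use the automorphy and the holomorphy/non-vanishing at $s=1$ of the symmetric power $L$-functions $L(s,\mathrm{sym}^m f)$, $m\le 4$, together with a sieve. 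Granting such a bound, partial summation gives $\sum_{N(\mathfrak{p})\ge x,\,C(\mathfrak{p})=0}1/N(\mathfrak{p})\ll\int_x^\infty (t(\log t)^{1+\delta})^{-1}\,dt\ll(\log x)^{-\delta}=o(1)$.

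The hard part is the first assertion: plain Sato--Tate is insufficient, and one genuinely needs the quantitative sparsity of primes with vanishing coefficient, i.e.\ the additional arithmetic input (large Galois image, equivalently symmetric-power functoriality, or an effective Sato--Tate theorem). The divergence statement, by contrast, is just partial summation layered on top of the qualitative Sato--Tate theorem.
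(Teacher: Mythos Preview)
Your proposal is correct and matches the paper's approach. For the divergence statement the paper does exactly what you do: Sato--Tate gives a positive proportion of primes with $C(\mathfrak{p})<0$, and partial summation turns $A(t)\gg t/\log t$ into $\sum_{\mathfrak{p}\in E(x)}1/N(\mathfrak{p})\gg\log\log x$. For the first assertion the paper simply invokes Serre \cite[pp.~162--163]{serre}, whose effective Chebotarev argument via the $\ell$-adic representations yields precisely the power-of-log saving $|\{\mathfrak{p}:N(\mathfrak{p})\le x,\ C(\mathfrak{p})=0\}|\ll x/(\log x)^{1+\delta}$ that you isolated as the necessary input; your remark that qualitative Sato--Tate (density zero) is by itself insufficient here is correct and is a subtlety the paper leaves implicit in that citation.
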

 \begin{proof}
 The first result follows immediately from \cite[pp 162-163]{serre}. For the second case, from the Sato-Tate theorem one has $C(\mathfrak{p}) < 0$ for a positive proportion of prime ideals in $\mathcal{P}$, when they are counted according to their norms (see \cite[pages 1-6]{sato}). So let for some large enough $x$, $E(x)$ be the set of prime ideals defined by 
 \[E(x):= \{\mathfrak{p} | N(\mathfrak{p}) \leq x, C(\mathfrak{p}) <0\}.\]
 By Sato-Tate theorem, there exists an $\delta >0$, such that
 \[|E(x)| > \delta \cdot \frac{x}{\log x}. \]
 So by partial summation formula one has
 \[\sum_{\mathfrak{p} \in E(x)} \frac{1}{N(\mathfrak{p})} >  \frac{\delta x}{x \log x}+\int_{2}^{x} \frac{\delta x}{x^2 \log x} dx=  \frac{\delta}{ \log x}+\int_{2}^{x}  \frac{\delta}{x \log x} dx.\]
 We note that the integral is divergent and hence the lemma follows.
 
\end{proof}
\noindent\textbf{Proof of \thmref{ratio}.}
\lemref{3.5} implies that we can put $j(\mathfrak{m})=C(\mathfrak{m})$ in \lemref{3.4} and hence the theorem follows. \qed

\end{document}